\newcommand{\cG}{{\cal G}}
\newcommand{\Z}{\mathbb{Z}}
\newcommand{\floor}[1]{\left \lfloor #1 \right \rfloor}
\newcommand{\ceil}[1]{\left \lceil #1 \right \rceil}
\newcommand{\qed}[0]{\begin{flushright} \rule{2mm}{3mm} \end{flushright}}
\newenvironment{proof}{\noindent{\bf Proof: }}{{\qed}}
\newcommand{\beq}[1]{\begin{equation}\label{#1}}
\newcommand{\enq}[0]{\end{equation}}
\newtheorem{theorem}{Theorem}[section]
\newtheorem{defn}[theorem]{Definition}
\newtheorem{lemma}[theorem]{Lemma}
\newtheorem{corollary}[theorem]{Corollary}
\newcommand{\figref}[1]{Figure~\ref{#1}}
\title{Revolutionaries and Spies}
\author{David Howard\thanks{\noindent The research of the first author was
supported by BSF grant no. $2006099$, and by ISF grants Nos.
$779/08$, $859/08$ and $938/06$.
}
 \\Department of Mathematics  \\ Technion-Israel Institute
of Technology \\ Technion City, Haifa 32000\ \ Israel \\ \tt{howard@techunix.technion.ac.il}\\
\and Clifford Smyth \\ Department of Mathematics \\ UNC-Greensboro \\
Greensboro, NC  \ \ USA \\ \tt{cdsmyth@uncg.edu}
}
\author{David Howard\thanks{\noindent The research of the first author was
supported by BSF grant no. $2006099$, and by ISF grants Nos.
$779/08$, $859/08$ and $938/06$.
}
\\ Department of Mathematics \\ Colgate University\\ Hamilton, NY 13346, U.S.A.
\and Clifford Smyth \\ Department of Mathematics and Statistics\\ UNC-Greensboro \\
Greensboro, NC  27402  USA \\ \tt{cdsmyth@uncg.edu}
}
\date{\today}
\begin{document}

\maketitle

\begin{abstract} \emph{Revolutionaries and Spies} is a game, $\cG(G,r,s,k)$, played on a graph $G$ between two teams: one team consists of $r$ revolutionaries, the other consists of $s$ spies.  To start, each revolutionary chooses a vertex as its position.  The spies then do the same.   (Throughout the game, there is no restriction on the number of revolutionaries and spies that may be positioned on any given vertex.) The revolutionaries and spies then alternate moves with the revolutionaries going first.  To move, each revolutionary simultaneously chooses to stay put on its vertex or to move to an adjacent vertex.  The spies move in the same way.    The goal of the revolutionaries is to place $k$ of their team on some vertex $v$ in such a way that the spies cannot place one of their spies at $v$ in their next move; this is a win for the revolutionaries.  If the spies can prevent this forever, they win.  There is no hidden information; the positions of all revolutionaries and spies is known to both sides at all times.

We will present a number of basic results as well as the result that  if $\cG(\Z^2,r,s,2)$ is a win for the spies, then $s \geq 6 \lfloor \frac{r}{8} \rfloor$. (Here allowable moves in $\Z^2$ consist of one-step horizontal, vertical or diagonal moves.)
\end{abstract}

\section{Introduction}

Let $G$ be a graph, possibly infinite, and let $r,s,k$ be positive integers.  \emph{Revolutionaries and Spies}, denoted $\cG(G,r,s,k)$, is the following game invented by Beck \cite{Bec}. $\cG(G,r,s,k)$ is played between two teams of agents.   The first team is a team of $r$ ``revolutionaries'' and the second a team of $s$ ``spies''.  In round $0$, each revolutionary takes a position on some vertex of $G$ and, afterwards, each spy does the same.  There is no restriction on the number of spies or revolutionaries that may be placed on a vertex at any point in the game.  For $i \geq 1$, round $i$ begins with each revolutionary moving either to a vertex adjacent to its current vertex or staying at its current vertex.  Round $i$ ends with the spies moving in the same fashion.  The revolutionaries have a \emph{meeting} of size $k$ at a vertex $v$ if $k$ or more revolutionaries are present at that vertex. A meeting at vertex $v$ is \emph{guarded} if a spy is present at $v$.  The revolutionaries win $\cG(G,r,s,k)$ if they can guarantee an unguarded meeting of size $k$ at the \emph{end} of some round after the spies have had one move to respond.  Otherwise the spies have a strategy to prevent this for all time and we say the spies win $\cG(G,r,s,k)$.  Note that when the revolutionaries first form a number of meetings of size $k$, the spies have one move to guard them all.  If the spies succeed, the game continues; otherwise, the revolutionaries win.

Continuous versions of this problem can also be considered.  For example, one could play the game in the plane, where each agent has the power to move to points within Euclidean distance $1$ from their current position.  Or the agents may move continuously at rates less than or equal to $1$.  These variants were suggested by Beck \cite{Bec}.

There is a similarity between the moves in \emph{Spies and Revolutionaries} and \emph{Cops and Robbers}  \cite{NowWin, Qui}.  The latter game is a pursuit game played by a cop and a robber on a graph $G$: the cop chooses a vertex, then the robber chooses a vertex, and players move alternately starting with the cop. A move consists of either staying at one's present vertex or  moving to an adjacent vertex; each
move is seen by both players. The cop wins if he manages to occupy the same vertex as the robber and the robber wins if he avoids this forever.  The graphs on which the cop has a winning strategy have been characterized \cite{NowWin, Qui}.  

\begin{defn}
Let $G$ be a graph and let $r $ and $k$ be positive integers.  We define $\sigma(G,r,k)$ to be the minimum value of $s$ such that the spies win $\cG(G,r,s,k)$.
\end{defn}

We record the following trivial observation.
\begin{lemma}  \label{splittinglemma} If $G$ is a graph,  then
\[ \min\left\{|V(G)|, \floor{\frac{r}{k}}\right\} \leq \sigma(G,r,k) \leq \min\{|V(G)|, r - k + 1\}\]
\end{lemma}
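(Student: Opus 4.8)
The plan is to prove the two inequalities independently, each by writing down an explicit strategy, and to use the (trivial) monotonicity of the game in $s$ — a surplus spy may be stacked on top of an existing spy and copy its moves — so that $\sigma(G,r,k)$ really is a threshold: it is $\geq m$ iff the revolutionaries win whenever $s < m$, and it is $\leq m$ iff the spies win when $s = m$.

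For the lower bound $\sigma(G,r,k) \geq m$, where $m := \min\{|V(G)|, \floor{r/k}\}$, I would show the revolutionaries win already at the end of round $0$ whenever $s < m$. Since $m \leq |V(G)|$ and $mk \leq k\floor{r/k} \leq r$, in round $0$ the revolutionaries can pick $m$ distinct vertices and place at least $k$ revolutionaries on each. The spies then take positions, occupying at most $s < m$ vertices, so at least one of the $m$ chosen vertices holds a meeting of size $\geq k$ with no spy on it — an unguarded meeting at the end of round $0$, which is a revolutionary win. For the upper bound I would treat the two terms of $\min\{|V(G)|, r-k+1\}$ with two separate spy strategies, each giving a bound that the minimum then inherits. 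With $s = |V(G)|$ spies, place one spy on every vertex and never move; every vertex is permanently guarded and the spies win, so $\sigma \leq |V(G)|$. With $s = r-k+1$ spies, use a ``shadowing'' strategy: after the revolutionaries' initial placement, designate $r-k+1$ of them by any fixed rule and put one spy on each designated revolutionary's vertex; thereafter, in every round, once the revolutionaries have moved, each spy moves onto the new vertex of the revolutionary it shadows. This is a legal spy move because the spy was co-located with its revolutionary and the revolutionary moved to an adjacent or identical vertex, so each designated revolutionary is always co-located with a spy. Any meeting of size $k$ contains at least one designated revolutionary (only $k-1$ are undesignated), hence is guarded; the spies win and $\sigma \leq r-k+1$.

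Since the statement is flagged as a trivial observation, there is no genuine obstacle; the only points that need a moment's care are (i) checking that the revolutionaries are permitted to win at the end of round $0$ — the spies' initial placement is precisely their ``one move to respond'' — which validates the lower bound, (ii) confirming that the spies' tracking move in the shadowing argument is always legal, and (iii) invoking the monotonicity in $s$ so that exhibiting a revolutionary win for every $s < m$ and a spy win for the relevant values of $s$ actually pins $\sigma(G,r,k)$ into the claimed interval. All three follow immediately from the rules as stated.
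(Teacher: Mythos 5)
Your proposal is correct and matches the paper's argument: the lower bound by having the revolutionaries form $\min\{|V(G)|,\floor{r/k}\}$ simultaneous meetings on distinct vertices in round $0$, and the upper bound via the two spy strategies of occupying every vertex or shadowing $r-k+1$ designated revolutionaries. The extra remarks on monotonicity in $s$ and the legality of the shadowing move are just careful spellings-out of what the paper leaves implicit.
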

\begin{proof}
For the lower bound, observe that the revolutionaries can construct $\min\left\{|V(G)|, \floor{\frac{r}{k}}\right\}$ meetings of size $k$ on distinct vertices in their first move. The spies must be able to guard all of these meetings.  The upper bound is obtained by considering two separate strategies for the spies. The spies could win by using $|V(G)|$ spies to permanently guard each vertex. The spies could also win by picking $r-k+1$ revolutionaries to follow, assigning them distinct spies to stay on their respective positions.  The spies win with this strategy as well, as every meeting of size $k$ will involve a revolutionary that is being followed.
\end{proof}
This trivial lower bound from Lemma \ref{splittinglemma} is attained on acyclic  graphs.
\begin{theorem}  \label{acyclicthm}  If $G$ is an acyclic graph, then
\[ \sigma(G,r,k) = \min\left\{|V(G)|, \floor{\frac{r}{k}}\right\}.\]
\end{theorem}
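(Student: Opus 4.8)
The lower bound is exactly Lemma~\ref{splittinglemma}, which in addition gives $\sigma(G,r,k)\le |V(G)|$; so it remains only to prove $\sigma(G,r,k)\le\floor{r/k}$ for every forest $G$. First I would reduce to a single tree: revolutionaries never leave the connected component they start in, so once the opening placement is over the spies know the number $r_i$ of revolutionaries that landed in each tree component $T_i$, and since $\sum_i\floor{r_i/k}\le\floor{(\sum_i r_i)/k}=\floor{r/k}$ they may commit $\floor{r_i/k}$ spies to $T_i$ and play the components independently. Thus assume $G=T$ is a tree and that $t:=\floor{r/k}$ spies are available; if $r<k$ there is nothing to do, so assume $r\ge k$.

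The plan is to have the spies maintain, after each of their moves (the opening placement included), a configuration from which they can always answer the next revolutionary move, the working candidate for the invariant being: \emph{the set $X$ of occupied vertices is such that every connected component of $T\setminus X$ contains fewer than $k$ revolutionaries.} This alone already forbids an unguarded meeting, since a vertex with $\ge k$ revolutionaries and no spy would lie inside a component of $T\setminus X$ of weight $\ge k$. Establishing it at the outset is the easy step: root $T$ and sweep upward, setting $g(v)$ equal to the number of revolutionaries at $v$ plus $\sum g(u)$ over those children $u$ of $v$ not assigned a spy, and assigning a spy to $v$ exactly when $g(v)\ge k$ (after which $v$ contributes $0$ to its parent's sum). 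Each spy placed is charged to the $\ge k$ revolutionaries its sweep had just accumulated; a revolutionary is charged to at most one spy (the first spy at or above it on the path to the root), so at most $\floor{r/k}$ spies are used, and by construction every component of $T$ minus the spy vertices has weight $<k$.

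The substance of the proof is \emph{maintenance}: from an admissible configuration with spy set $X$ and revolutionary positions $P$, after the revolutionaries move to $P'$ (each at most one step) the spies must reach a new admissible $X'$, each also moving at most one step. Here the bare component condition does not quite propagate itself, and pinning down the right strengthening is the main obstacle: the revolutionaries can concentrate their forces on a single vertex and then, on the following move, scatter $k$ of them onto each of several cut-pieces faster than carelessly placed spies can follow — which is precisely what breaks down if the spies just re-run the greedy cut each round on, say, a large star. The remedy I would pursue is to strengthen the invariant so that the cut is forced to be ``as low as possible'' and the surplus spies are required to shadow the heaviest subtrees, and then to exploit the fact that every revolutionary moves only one step per round: the excess weight a cut-piece can acquire must have entered across a single boundary spy-vertex, so that spy (or a surplus spy that has been kept near it) can retreat one step into the piece and recut it, and iterating this over all pieces produces $X'$. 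Making this rigorous — choosing the strengthened invariant so that all these one-step retreats are simultaneously available, which is a Hall-type matching check — is the crux; the reduction, the initialization count, and the ``invariant implies no unguarded meeting'' implication are routine.
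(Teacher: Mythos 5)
First, note that this paper does not actually contain a proof of Theorem~\ref{acyclicthm}: the text explicitly defers it to \cite{CraSmyWes}, so your attempt can only be judged on its own terms. On those terms, the parts you do carry out are fine: the lower bound and the bound $\sigma(G,r,k)\le |V(G)|$ are indeed Lemma~\ref{splittinglemma}; the reduction to a single tree via $\sum_i\floor{r_i/k}\le\floor{r/k}$ is correct; the bottom-up greedy cut does produce an initial spy placement of size at most $\floor{r/k}$ whose removal leaves only components with fewer than $k$ revolutionaries; and that invariant, if it could be restored after every spy move, would indeed prevent an unguarded meeting.

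The problem is that everything you have proved is the easy half, and the maintenance step --- which is the entire content of the theorem --- is left as a plan rather than an argument. You correctly identify that the bare component invariant is not self-propagating (your star example), but the proposed repair is never specified: no strengthened invariant is written down, and the ``Hall-type matching check'' that would make the one-step retreats simultaneously feasible is exactly the theorem, not a routine verification. Moreover, the one concrete heuristic you offer is false as stated: a component of $T\setminus X$ can have many spy-occupied vertices on its boundary, and in a single round revolutionaries can pour into it from several of these at once (and revolutionaries already inside can simultaneously reconcentrate), so the excess weight of a piece need not ``have entered across a single boundary spy-vertex,'' and there is no single spy who can retreat to recut it. Until a precise invariant is stated and shown to be restorable by a one-step spy move in every case, this is a proof outline with the crux missing; the actual argument in \cite{CraSmyWes} is substantially more delicate than the sketch suggests.
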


This theorem was initially proved by one of the authors in an equivalent form in \cite{Smy}.  The proof of Theorem \ref{acyclicthm} will appear in  \cite{CraSmyWes} which covers \emph{Revolutionaries and Spies} on trees and unicyclic graphs.  It would be interesting to characterize those graphs $G$ for which $\sigma(G,r,k) = \min\left\{|V(G)|, \floor{\frac{r}{k}}\right\}$.  By Theorem \ref{powerlemma} all powers of acyclic graphs are included.

For $v,w \in V(G)$, let $d_G(v,w)$ be the \emph{distance} between $v$ and $w$ in $G$, i.e. the minimum length of an $v$,$w$-path in $G$.  If no such path exists, then let $d_G(v,w) = +\infty$.  Note $d_G(v,v) = 0$.

Let $G$ and $H$ be graphs.  The \emph{strong product} of $G$ and $H$, denoted $G \boxtimes H$, is the graph with vertex set $V(G) \times V(H)$ such that vertices $(g,h)$ and $(g',h')$ are adjacent  in $G \boxtimes H$ if and only if  $(g,h) \neq (g',h')$,  $d_G(g,g') \leq 1$, and $d_H(h,h') \leq 1$.  We denote by $\Z$ the graph $G$ with $V(G) = \Z$ and $E(G) = \{\{i,i+1\}: i \in \Z\}$.  For $d \geq 1$, let $\Z^{\boxtimes d}$ be the $d$-fold strong product of $\Z$ with itself.  

We study \emph{Revolutionaries and Spies} on $\Z^{\boxtimes d}$.  Perhaps one of the most basic (yet nontrivial) quantities to study is the threshold $\sigma(\Z^{\boxtimes d}, r, 2)$.
\begin{theorem} \label{Z2thm} If $d$ is an integer with $d \geq 2$, then $\sigma(\Z^{\boxtimes d}, r, 2) \geq 6 \floor{\frac{r}{8}}$.
\end{theorem}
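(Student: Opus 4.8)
The plan is to pare the problem down, in three stages, to a finite statement about eight revolutionaries versus five spies on the king's grid $\Z^{\boxtimes 2}$, and then to verify that statement; the last stage carries essentially all the weight.

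First I would reduce to $d=2$. Projection onto the first two coordinates, $\pi\colon\Z^{\boxtimes d}\to\Z^{\boxtimes 2}$, fixes the sub-grid $\Z^{\boxtimes 2}\times\{0\}^{d-2}$ pointwise and is $1$-Lipschitz, since the metric of a strong product is the coordinatewise maximum of the factor metrics. Hence, from a winning spy strategy for $\cG(\Z^{\boxtimes d},r,s,2)$ the spies can extract one for $\cG(\Z^{\boxtimes 2},r,s,2)$: regard the revolutionaries' play in $\Z^{\boxtimes 2}$ as taking place in the sub-grid, shadow it with the $\Z^{\boxtimes d}$ strategy, and at each step play the projections of the shadow spies. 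These are legal moves ($\pi$ is $1$-Lipschitz), and a shadow spy occupying a sub-grid vertex $v$ projects to a spy at $v$, so every meeting guarded in the shadow game is guarded in the real one. Thus $\sigma(\Z^{\boxtimes d},r,2)\ge\sigma(\Z^{\boxtimes 2},r,2)$, and it suffices to prove the bound for $d=2$.

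Next I would reduce to a single gadget. Spare revolutionaries can always be parked one-per-vertex arbitrarily far from the action, so $\sigma(\Z^{\boxtimes 2},r,2)$ is non-decreasing in $r$; with $m=\floor{r/8}$ it is therefore enough to show $\sigma(\Z^{\boxtimes 2},8m,2)\ge 6m$, i.e.\ that $8m$ revolutionaries defeat $6m-1$ spies, which together with the previous paragraph yields Theorem~\ref{Z2thm}. The key lemma I would isolate is: \emph{there is a configuration $R^\ast$ of eight revolutionaries, occupying a bounded region and forming no meeting, such that for every placement $S$ of five spies there is a legal revolutionary move out of $R^\ast$ producing a set $M$ of size-two meetings for which the spies at $S$ have no legal move putting a spy on every vertex of $M$.} Granting this, the revolutionaries open by placing $m$ widely separated translates $R^\ast_1,\dots,R^\ast_m$ of $R^\ast$; with only $6m-1$ spies some translate $R^\ast_j$ has at most five spies within reach of its region (one spy can be within reach of at most one region once the regions are far enough apart), and the revolutionaries in that copy execute the move supplied by the lemma. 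No spy from another region can reach the resulting meetings, so the revolutionaries win, forcing $6m-1<\sigma(\Z^{\boxtimes 2},8m,2)$. (If the move in the lemma needs a bounded number $T$ of rounds rather than one, the same argument runs with ``within reach'' read as ``within distance $T$ of the active region''.)

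The remaining task --- pinning down $R^\ast$ and proving the lemma --- is the heart of the matter, and I expect the real difficulty to live entirely there. One must first see why the obvious attempts collapse: eight revolutionaries can form at most four meetings of size two at once, and a gadget built on six \emph{pairwise distant} threat vertices fails, because the revolutionaries cannot keep two of their number beside each of six disjoint $3\times 3$ neighborhoods, so five spies --- one next to each of five threats --- already survive. The threat vertices must therefore be packed into a bounded region with overlapping closed neighborhoods, and the eight revolutionaries spread over distinct vertices so as to retain the flexibility to realize many different meeting patterns. The verification is then finite: by the marriage theorem a meeting set $M$ is guardable from a spy set $S$ exactly when every $M'\subseteq M$ has at least $|M'|$ spies of $S$ in $\bigcup_{v\in M'}N[v]$, so one enumerates the meeting patterns reachable from $R^\ast$ and checks that every five-spy set fails this test against at least one of them. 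Choosing $R^\ast$ so that this case check goes through --- balancing ``the revolutionaries can threaten enough well-separated meeting sets'' against ``five spies can always be positioned to meet every Hall condition'' --- and organizing the finitely many essentially different spy placements (up to the symmetries of the grid, discarding spies too far away to matter) into a clean argument is the principal obstacle.
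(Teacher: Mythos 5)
Your two reductions are exactly the ones the paper uses: the projection argument for $d\ge 2$ is the paper's Theorem \ref{productlemma}, and the ``many widely separated translates of an $8$-vs-$5$ gadget'' argument is the paper's Corollary \ref{maincor} (which places copies at $(30i,0)$ and notes that a spy outside $[-11,11]^2$ of a copy is irrelevant because the gadget win uses at most $5$ moves inside $[-6,6]^2$). Both reductions are correct. But the entire content of the theorem lives in the key lemma you isolate and then do not prove: that some explicit configuration $R^\ast$ of eight revolutionaries defeats every placement of five spies. That is the paper's Theorem \ref{85thm} ($\sigma(\Z^{\boxtimes 2},8,2)=6$, with $R^\ast=\{(\pm1,\pm1),(\pm3,\pm3)\}$), and its proof is a two-to-three page case analysis. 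You correctly identify this as ``the heart of the matter'' and ``the principal obstacle,'' which is an honest assessment, but it means the proposal proves nothing beyond the routine outer shell.

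Two further points about the lemma as you state it. First, the one-move version is false for the paper's $R^\ast$ (and, one suspects, for any workable $R^\ast$): five spies \emph{can} be positioned so that every single revolutionary move from $R^\ast$ is answerable. The paper's proof shows this explicitly --- Claims 1 and 2 plus the case analysis only pin the spies down to one of a few survivable round-$0$ configurations, and the revolutionaries then need two to three further rounds of adaptive play, exploiting simultaneous threats at different time horizons (e.g.\ a meeting reachable in one round at $(0,2)$ together with one reachable in three rounds at $(0,6)$), to create an unguardable position. Your parenthetical about allowing $T$ rounds saves the reduction, but it breaks your proposed verification method: a single application of Hall's theorem to the one-move meeting patterns is not the right finite check. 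What is needed is a bounded-depth game-tree argument in which the spies' forced responses at each round are deduced from several overlapping multi-round threats, and that is precisely the analysis you have deferred.
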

Note that Lemma \ref{splittinglemma} only implies $\sigma(\Z^{\boxtimes d}, r, 2) \geq \floor{\frac{r}{2}}$.   We conjecture that $\sigma(\Z^{\boxtimes d}, r,2) = r-2$ for $d \geq 2$.

%We present a number of basic definitions and results in Section 2 including a Hall-type condition on when it is possible to move from one position to another (Lemma \ref{halllemma}).  

While this manuscript was under review the authors learned of the work in progress of several other authors.

Theorem \ref{acyclicthm} has been generalized in  \cite{CraSmyWes} to the case of unicyclic graphs $G$: $\sigma(G,r,k) \in \{\floor{\frac{r}{k}},\ceil{\frac{r}{k}}\}$ and furthermore if $k \nmid r$, then $\sigma(G,r,k) = \floor{\frac{r}{k}}$ if and only if $\ell \leq \max \{ \floor{\frac{r}{k}} - t + 2,3\}$ where $\ell$ is the length of the cycle and $t = |V(G)| - \ell$.

\emph{Revolutionaries and Spies} has now been studied on  interval graphs, hypercubes, random graphs, complete multipartite graphs, and also on a common generalization of trees and graphs with a dominating vertex \cite{ButCraPulWesZam}. 

The organization of the paper is as follows:  we present a number of basic definitions and results in Section 2 and then prove Theorem \ref{Z2thm} in Section 3.

%In the final section we outline a few directions for further research.  There we also comment on recent results on revolutionaries and spies on a number of different classes of graphs \cite{ButCraPulWesZam,CraSmyWes}. 

\section{Basic Results}

Note that if $r',r,k'$, and $k$ are integers with $r' \geq r \geq 1$ and $k' \geq k \geq 1$, then $\sigma(G,r',k) \geq \sigma(G,r,k)$ and $\sigma(G,r,k') \leq \sigma(G,r,k)$.  For example, if $\cG(G,r',s,k)$ is a win for the spies, then so is $\cG(G,r,s,k)$:  the spies use their winning strategy in $\cG(G,r',s,k)$ to play $\cG(G,r,s,k)$ as if there were an additional $r'-r$ revolutionaries fixed on some vertex.

\begin{lemma} \label{sumlemma}

Let $k = \sum_{i=1}^c k_i$ and $r = \sum_{i=1}^c r_i$, where all quantities are positive integers.  For any graph $G$,

\[\sigma(G,r,k-c+1) \leq \sum_{i=1}^c \sigma(G,r_i,k_i)\]

\end{lemma}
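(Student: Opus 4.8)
The plan is to exhibit a winning strategy for a team of $s := \sum_{i=1}^c \sigma(G,r_i,k_i)$ spies in $\cG(G,r,k-c+1)$. (Note that $k-c+1 \ge 1$, since each $k_i \ge 1$ forces $k \ge c$, so the statement makes sense.) First I would have the spies fix, once and for all, a partition of the $r$ revolutionaries into teams $R_1,\dots,R_c$ with $|R_i| = r_i$; this can be decided in advance or after the revolutionaries' opening placement, it does not matter. For each $i$, allot a private squad $S_i$ of $\sigma(G,r_i,k_i)$ spies to $R_i$ and instruct $S_i$ to play, against the revolutionaries of $R_i$ only, a winning spy strategy for $\cG(G,r_i,k_i)$. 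This is legitimate because $R_i$ on $G$ is exactly an instance of that game, and a spy strategy there is allowed to consult only the history of those $r_i$ revolutionaries (and of its own spies). Since each round of $\cG(G,r,k-c+1)$ has all revolutionaries move and then all spies move, each squad $S_i$ receives precisely the information its strategy needs at precisely the moments it needs it, and the $c$ squads act simultaneously without interfering. Consequently, at the end of every round, after the full spy response, no vertex carries an unguarded $R_i$-meeting of size $k_i$, for any $i$.

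The only real content is a pigeonhole step. Suppose at the end of some round, before the spies respond, the revolutionaries have a meeting of size $k-c+1$ at a vertex $v$, and let $m_i$ be the number of these revolutionaries lying in $R_i$, so $\sum_{i=1}^c m_i \ge k-c+1$. Were $m_i \le k_i-1$ for every $i$, we would get $\sum_{i=1}^c m_i \le \sum_{i=1}^c (k_i-1) = k-c$, a contradiction. Hence $m_i \ge k_i$ for some $i$, i.e. $R_i$ has a meeting of size $k_i$ at $v$; by the invariant above, after the spies move some spy of $S_i$ occupies $v$, so the meeting of size $k-c+1$ at $v$ is guarded. Because this argument applies at every vertex where $k-c+1$ revolutionaries gather, and the squads' guarantees hold simultaneously in the same round, the spies guard all meetings of size $k-c+1$ in every round, so they win $\cG(G,r,k-c+1)$ and $\sigma(G,r,k-c+1)\le s$.

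I do not anticipate a genuine obstacle. The one point deserving a careful sentence rather than a wave of the hand is the legitimacy of running $c$ independent sub-strategies inside a single game: namely, that a winning spy strategy for $\cG(G,r_i,k_i)$ depends only on the trajectories of its own revolutionaries and spies, so the squads neither lack information nor collide, and that ``no unguarded $k_i$-meeting at the end of any round'' is exactly the invariant such a strategy enforces. Beyond that, the proof rests only on the counting identity $\sum_{i=1}^c (k_i-1) = k-c$ and the triviality that guarding a sub-meeting guards the super-meeting.
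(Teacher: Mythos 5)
Your proposal is correct and is essentially the paper's own proof: partition the revolutionaries into groups of sizes $r_i$, assign each group a private squad of $\sigma(G,r_i,k_i)$ spies running its own winning strategy, and observe that any unguarded meeting has size at most $\sum_{i=1}^c (k_i-1)=k-c$. Your explicit pigeonhole step and the remark about the independence of the sub-strategies merely spell out what the paper leaves implicit.
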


\begin{proof}

Let $s_i = \sigma(G,k_i,r_i)$ for $1 \leq i \leq c$, and set $s = \sum_{i=1}^c s_i$.  We claim $\cG(G,r,s,k-c+1)$ is a win for the spies.  The spies divide the revolutionaries into $c$ disjoint groups $R_i$, $1 \leq i \leq c$, where $R_i$ consists of $r_i$ revolutionaries. The spies are also divided into disjoint groups $S_i$, $1 \leq i \leq c$, where $S_i$ consists of $s_i$ spies. For $1 \leq i \leq c$ the spies simultaneously use group $S_i$ to prevent a meeting of size $k_i$ amongst the revolutionaries in group $R_i$.  The largest possible unguarded meeting under this strategy is $\sum_{i=1}^c (k_i - 1) = k-c$.
\end{proof}
Note that if $r, a,k$ are positive integers, then Lemma \ref{sumlemma} implies the three inequalities $\sigma(G,r+a,k+a) \leq \sigma(G,r,k)$, $\sigma(G,r+a,k) \leq \sigma(G,r,k) + a$, and $\sigma(G,a r,a k) \leq a \sigma(G,r,k)$.

\begin{lemma}
If $G$ is a graph and $a$ is a positive integer, then $\sigma(G,a r, a k) \geq \sigma(G,r,k)$.
\end{lemma}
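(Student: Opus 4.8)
The plan is to prove the equivalent statement: if the spies win $\cG(G,ar,ak,s)$ for some $s$, then they also win $\cG(G,r,s,k)$. Granting this, we may apply it with $s = \sigma(G,ar,ak)$ (for which the spies win $\cG(G,ar,ak,s)$ by definition) to conclude $\sigma(G,r,k) \le s = \sigma(G,ar,ak)$.

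To set this up I would use a ``blow-up'' correspondence between the two games. Fix a winning strategy $\Sigma$ for the spies in $\cG(G,ar,ak,s)$. To play $\cG(G,r,s,k)$ against the $r$ real revolutionaries, the spies maintain a virtual instance of $\cG(G,ar,ak,s)$ in which each real revolutionary $R_j$ is replaced by a bundle of $a$ virtual revolutionaries, all sitting on $R_j$'s current vertex; this uses exactly $ar$ virtual revolutionaries. Whenever the real revolutionaries place or move, each bundle copies its real revolutionary. Since a single-step move of $R_j$ in $G$ is legal, the simultaneous identical moves of the $a$ virtual revolutionaries in the bundle are legal in $\cG(G,ar,ak,s)$, so the virtual play is a legitimate play of that game. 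The spies then consult $\Sigma$ on the current virtual position and play, in the real game, exactly the spy positions it prescribes; the initial placement order (real revolutionaries, then real spies) maps to (virtual revolutionaries, then virtual spies).

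Next I would verify that this inherited strategy defeats the real revolutionaries. Suppose that at the end of some round, after the real spies have responded, there is an unguarded meeting of size $k$ at a vertex $v$ in the real game: at least $k$ real revolutionaries sit on $v$ and no spy sits on $v$. In the corresponding virtual position at least $ak$ virtual revolutionaries sit on $v$, i.e. there is a meeting of size $ak$ at $v$, and the virtual spies have just moved according to $\Sigma$ yet left $v$ unguarded, contradicting that $\Sigma$ is a winning strategy in $\cG(G,ar,ak,s)$. Hence no such unguarded meeting ever occurs in the real game, so the spies win $\cG(G,r,s,k)$, completing the argument as indicated above.

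I do not expect a serious obstacle here; the proof is essentially a bookkeeping argument. The one point requiring care is aligning the two timelines and the ``one move to respond'' convention: one must check that each legal move of the real spies following a real revolutionary move corresponds to a legal $\Sigma$-move of the virtual spies following the (legal) virtual revolutionary move, so that the play the spies simulate is genuinely a play of $\cG(G,ar,ak,s)$ in which $\Sigma$ applies at every step.
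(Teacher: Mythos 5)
Your proposal is correct and is essentially the paper's own argument: the spies identify each real revolutionary with a bundle of $a$ coincident virtual revolutionaries and play their winning strategy for $\cG(G,ar,s,ak)$, noting that a real meeting of size $k$ becomes a virtual meeting of size $ak$. You simply spell out the bookkeeping in more detail than the paper does.
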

\begin{proof}
Let $s = \sigma(G, a r, a k)$.  We claim that $\cG(G,r,s,k)$ is a win for the spies.  The spies identify each revolutionary with a group of $a$ revolutionaries sharing the same position and then follow their winning strategy in $\cG(G,a r,s, a k)$.  The spies prevent a meeting of size $k$ in $\cG(G,r,s,k)$ in this way because the corresponding groups will form a meeting of size $a k$ in $\cG(G,a r, s, ak)$. 
\end{proof}

If $G$ is a graph, the $n$th \emph{power} of $G$ is the graph $G^n$ with $V(G^n) = V(G)$ and $E(G^n) = \{\{v,w\}: 0 < d_G(v,w) \leq n\}$.

\begin{theorem} \label{powerlemma}
If $G$ is a graph, then $\sigma(G^n,r,k) \leq \sigma(G,r,k)$.
\end{theorem}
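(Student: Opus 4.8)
The plan is to show that any winning strategy for the spies in $\cG(G,r,s,k)$ can be transplanted to a winning strategy in $\cG(G^n,r,s,k)$, which gives $\sigma(G^n,r,k) \le \sigma(G,r,k)$ by the definition of $\sigma$. Set $s = \sigma(G,r,k)$ and consider a play of $\cG(G^n,r,s,k)$. The key observation is that a single edge of $G^n$ corresponds to a path of length at most $n$ in $G$: if a revolutionary moves from $v$ to $w$ in one step in $G^n$, then $d_G(v,w) \le n$. Time in $G^n$ should therefore be ``slowed down'' by a factor of $n$ when simulated in $G$: each round of the $G^n$-game is simulated by $n$ rounds of the $G$-game.

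The steps, in order. First, I would have the spies maintain a simulated play of $\cG(G,r,s,k)$ running at $n$ times the speed: whenever the revolutionaries make a move in the real $G^n$-game, the spies fix, for each revolutionary, a $G$-path of length at most $n$ (padded with stationary steps to have length exactly $n$, say) realizing that move, and feed these $n$ intermediate $G$-positions, one round at a time, to their winning $G$-strategy. Second, during these $n$ simulated $G$-rounds the $G$-strategy dictates $n$ successive moves for each simulated spy; the real spy in the $G^n$-game then simply jumps directly to the position its simulated counterpart occupies after the $n$th simulated round — this is a legal single move in $G^n$ since the simulated spy moved at most $n$ steps in $G$ during those $n$ rounds, so its start and end points are at $G$-distance at most $n$, hence adjacent (or equal) in $G^n$. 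Third, I would check the endgame bookkeeping: the real positions in the $G^n$-game coincide with the simulated $G$-positions precisely at the ``multiple-of-$n$'' time steps, and at such a step the revolutionaries' configurations are identical in both games; since the $G$-strategy guarantees every meeting of size $k$ is guarded in the $G$-game, the same meetings are guarded in the $G^n$-game. One must also confirm that a meeting the revolutionaries could form at an intermediate (non-multiple-of-$n$) step in some hypothetical faster play is not available to them in $G^n$ — but this is automatic, because in $G^n$ the revolutionaries only ever occupy the multiple-of-$n$ configurations, so the only meetings that can occur are the ones the simulation already guards.

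The main obstacle, and the point needing the most care, is the synchronization of turn order: in $\cG(G^n,\cdot)$ the revolutionaries move and then the spies respond, and this must be faithfully mirrored in the sped-up $G$-game without the spies ever ``peeking'' at a revolutionary move they have not yet seen. This is handled by the padding convention: the spies commit to the full length-$n$ $G$-path for each revolutionary as soon as they see that revolutionary's single $G^n$-move, so all $n$ simulated revolutionary sub-moves are known before the corresponding simulated spy sub-moves are required, exactly as the $G$-strategy expects. A minor additional subtlety is the very first round (round $0$), where revolutionaries and then spies choose initial positions with no movement; here no slow-down is needed — the spies place their real spies wherever their $G$-strategy places its spies in response to the identical initial revolutionary configuration — and the length-$n$ simulation begins only with round $1$.
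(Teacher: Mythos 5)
Your proposal is correct and follows essentially the same route as the paper's proof: decompose each single move in $G^n$ into $n$ intermediate moves in $G$, feed these to the spies' winning strategy in $\cG(G,r,s,k)$, and observe that the net displacement of each simulated spy over those $n$ rounds is a legal single move in $G^n$. The additional points you raise (padding with stationary steps, synchronization of turn order, and the irrelevance of intermediate configurations) are correct elaborations of details the paper leaves implicit.
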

\begin{proof}
Let $s = \sigma(G,r,k)$.  We show that $\cG(G^n,r,s,k)$ is a win for the spies.  Let $R_0$ be the initial position of the revoluationaries.  The spies play the position $S_0$ from their winning strategy in $\cG(G,r,s,k)$.  If the revolutionaries move to a new position $R_n$ in $G^n$, then there are intermediate positions $R_1, \ldots, R_{n-1}$ such that $R_i$ is one move from $R_{i-1}$ in $\cG(G,r,s,k)$.  Let $S_1, \ldots, S_{n}$ be the corresponding countermoves for the spies in their winning strategy in $\cG(G,r,s,k)$.  Since $S_n$ is one move from $S_0$ in $G^n$, the spies may play $S_n$ as their response to $R_n$ in $\cG(G^n,r,s,k)$ and the process repeats. This strategy indefinitely prevents a meeting of size $k$.
\end{proof}
\begin{theorem} \label{productlemma} If $G$ and $H$ are graphs, then \[\sigma(G \boxtimes H,r,k) \geq \max(\sigma(G,r,k),\sigma(H,r,k)).\]
\end{theorem}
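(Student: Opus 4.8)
The plan is to exploit the fact that $G$ sits inside $G \boxtimes H$ as a retract. Fix a vertex $h_0 \in V(H)$ and consider the maps $\phi \colon V(G) \to V(G \boxtimes H)$ with $\phi(g) = (g,h_0)$ and $\pi \colon V(G \boxtimes H) \to V(G)$ with $\pi(g,h) = g$. These satisfy $\pi \circ \phi = \mathrm{id}$, and, crucially, both carry any pair of equal-or-adjacent vertices to a pair of equal-or-adjacent vertices: for $\phi$ this is because $d_G(g,g')\le 1$ forces equality or adjacency of $(g,h_0)$ and $(g',h_0)$ in $G\boxtimes H$ (using $d_H(h_0,h_0)=0$); for $\pi$ it is because an edge or equality in $G\boxtimes H$ demands $d_G(g,g')\le 1$. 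Since $G \boxtimes H \cong H \boxtimes G$, it suffices to prove $\sigma(G \boxtimes H, r, k) \geq \sigma(G, r, k)$. Set $s = \sigma(G \boxtimes H, r, k)$; I will show the spies win $\cG(G, r, s, k)$ by simulating a winning spy strategy $\Sigma$ for $\cG(G \boxtimes H, r, s, k)$.

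The simulation runs a \emph{shadow} play of $\cG(G \boxtimes H, r, s, k)$ alongside the real game on $G$, under the invariant that whenever real revolutionary $j$ stands at $g_j \in V(G)$, its shadow copy stands at $\phi(g_j) = (g_j, h_0)$. When the revolutionaries make their round-$i$ move in the real game, sending each $g_j$ to some $g_j'$ with $g_j = g_j'$ or $g_jg_j' \in E(G)$, the corresponding shadow move sends $(g_j,h_0)$ to $(g_j',h_0)$, which is a legal move in $G\boxtimes H$ by the property of $\phi$ above; so $\Sigma$ supplies a legal shadow spy response. The spies then play the $\pi$-projection of that response as their real round-$i$ move, which is legal in $G$ by the property of $\pi$. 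The invariant is thereby maintained for round $i{+}1$, and the timing matches: in both games the revolutionaries move first and the spies respond.

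To see the spies win, note that $\Sigma$ is winning, so in the shadow game, at the end of every round, every vertex of $G\boxtimes H$ holding at least $k$ shadow revolutionaries also holds a shadow spy. Suppose, for contradiction, that at the end of some real round there were an unguarded meeting of size $k$ at a vertex $g \in V(G)$. Then at least $k$ shadow revolutionaries occupy $\phi(g) = (g,h_0)$, hence some shadow spy occupies $(g,h_0)$, and its $\pi$-projection is a real spy at $g$, contradicting that the real meeting was unguarded. Thus the spies win $\cG(G, r, s, k)$, giving $\sigma(G, r, k) \le s = \sigma(G \boxtimes H, r, k)$; the symmetric argument handles $H$, yielding the maximum.

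I do not anticipate a genuine obstacle: the entire content is the two move-translation facts (revolutionary moves of $G$ lift to legal moves of $G\boxtimes H$, and spy moves of $G\boxtimes H$ project to legal moves of $G$), both immediate from the definition of the strong product together with $d_H(h_0,h_0)=0$, plus the routine bookkeeping that the shadow game and the real game proceed in lockstep. If one wished, the argument could be stated abstractly as: $\sigma(\cdot, r, k)$ does not increase under passage to a retract in the reflexive-homomorphism sense, and $G$ (likewise $H$) is such a retract of $G\boxtimes H$.
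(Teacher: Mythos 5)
Your proof is correct and is essentially the paper's argument: both rest on the embedding $g \mapsto (g,h_0)$ of $G$ into $G \boxtimes H$ together with the coordinate projection $(g,h) \mapsto g$, each of which preserves the ``equal or adjacent'' relation. The only difference is the direction of the simulation --- the paper lifts a winning revolutionary strategy from $G$ up to $G \boxtimes H$ while projecting the spies down, whereas you push a winning spy strategy from $G \boxtimes H$ down to $G$ while lifting the revolutionaries up; these are contrapositive formulations of the same idea, yours having the minor virtue of arguing directly from spy strategies and so not leaning on the (implicit) determinacy of the game.
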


\begin{proof} When playing the game ${\cal G}(G \boxtimes H,r,s,k)$, the revolutionaries can choose to play only in a single copy of $G$ (or $H$).  Regardless of where the spies play, the revolutionaries can project the spies' positions into the chosen copy of $G$.  If the revolutionaries can win ${\cal G}(G,r,s,k)$ against the projected spies, then they can win in $G \boxtimes H$.  Thus $\sigma(G \boxtimes H,r,s) \geq \sigma(G,r,s)$.  The same argument works for $H$.  Thus, the theorem follows.

\end{proof}

\section{Proof of Theorem \ref{Z2thm}.}

As a warmup, we prove
\begin{theorem} \label{Zthm}
If $r$ and $k$ are positive integers, then $\sigma(\Z,r,k)  = \floor{\frac{r}{k}}$.
\end{theorem}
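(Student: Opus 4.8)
The plan is to prove the two inequalities separately. The lower bound $\sigma(\Z,r,k)\ge\floor{r/k}$ is immediate from Lemma~\ref{splittinglemma}: since $\Z$ is infinite, $\min\{|V(\Z)|,\floor{r/k}\}=\floor{r/k}$. So the real content is the matching upper bound, namely exhibiting a winning strategy for $s=\floor{r/k}$ spies.

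The strategy I would use exploits the fact that $\Z$ is linearly ordered, so at every point in time the revolutionaries' positions can be listed in weakly increasing order $x_1\le x_2\le\cdots\le x_r$. I would assign spy $j$, for $1\le j\le\floor{r/k}$, the single job of occupying, at the end of every round, the position $x_{jk}$ of the $jk$-th revolutionary in this left-to-right order (ties broken arbitrarily; only the position matters). Call $k,2k,\ldots,\floor{r/k}k$ the \emph{checkpoint ranks}. At round $0$ the spies simply place spy $j$ on $x_{jk}$, and thereafter each spy tries to stay on ``its'' order statistic.

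Two things then need checking. First, legality: I claim that if every revolutionary moves a distance at most $1$, then each order statistic $x_m$ changes by at most $1$. This follows from the description $x_m=\min\{t\in\Z: |\{i:x_i\le t\}|\ge m\}$. Points that were $\le x_m$ move to $\le x_m+1$, so the new count at $x_m+1$ is $\ge m$, giving $x_m'\le x_m+1$; and any point that is now $\le x_m-2$ came from a point that was $\le x_m-1$, of which there were fewer than $m$, giving $x_m'\ge x_m-1$. Hence spy $j$, sitting on the old value of $x_{jk}$ at the start of a round, can legally move to its new value after the revolutionaries move. Second, that the strategy wins: an unguarded meeting of size $k$ at a vertex $v$ would force $x_i=x_{i+1}=\cdots=x_{i+k-1}=v$ for some $1\le i\le r-k+1$ with no spy at $v$. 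But among the $k$ consecutive ranks $i,i+1,\ldots,i+k-1$ there is exactly one multiple of $k$, and it is $\le i+k-1\le r$, hence equals a checkpoint rank $jk$; then spy $j$ occupies $x_{jk}=v$, a contradiction. So no unguarded meeting of size $k$ ever forms, and the spies win.

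The only mildly delicate point is the legality claim — arguing that an order statistic cannot jump more than one step even though individual revolutionaries may overtake one another — and I would isolate it as a one-line observation (or a tiny lemma) before describing the strategy; everything else is bookkeeping. This write-up also makes the bound tight in an instructive way: partitioning the ranks $1,\ldots,r$ into $\floor{r/k}$ blocks of $k$ consecutive ranks, so that every length-$k$ window of ranks meets a block boundary, is exactly what makes $\floor{r/k}$ spies both necessary (via Lemma~\ref{splittinglemma}) and sufficient.
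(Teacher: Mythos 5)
Your proposal is correct and is essentially the paper's own proof: the same strategy of stationing spy $j$ on the $jk$-th order statistic, justified by the same observation that each order statistic moves by at most $1$ per round. You spell out the lower bound and the ``one multiple of $k$ among $k$ consecutive ranks'' step a bit more explicitly than the paper does, but the argument is the same.
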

\begin{proof}  For $1 \leq i \leq r$, let $x_i$ be the position of the $i$th revolutionary in $\Z$ and let $x'_i$ be its position one move later.  Let $x_{(1)} \leq \cdots \leq x_{(r)}$ be the order statistics of the sequence $x$, i.e. a rearrangement of the $x_i$ into non-decreasing order.  Let $x'_{(1)} \leq \cdots \leq x'_{(r)}$ be the order statistics of the sequence $x'$.  Let $s = \floor{\frac{r}{k}}$.  The spies' strategy is to place the $i$th spy at position $y_i = x_{(ik)}$ for $1 \leq i \leq s$.  This clearly prevents an unguarded meeting of size $k$.  If the spies are then able to move to the positions $y'_i = x'_{(ik)}$, this strategy will be sustainable. 

It suffices to show that for all $1 \leq i \leq r$, $|x_{(i)} - x'_{(i)}| \leq 1$.  If $x_{(i)} = x_0$,  then there are at least $i$ revolutionaries on integers less than or equal to $x_0$.  After one move these same $i$ revolutionaries are on positions $x_0+1$ or less, meaning $x'_{(i)} \leq x_0 + 1 = x_{(i)}+1$.  Similarly  $x_{(i)} \leq x'_{(i)}+1$.

\end{proof}

Recall that $a, b \in \Z^{\boxtimes d}$ are adjacent if and only $a \neq b$ and $|a_i-b_i| \leq 1$ for all $1 \leq i \leq d$.

\begin{lemma}
\label{Zdlemma}
If $r$ and $k$ are positive integers with $1 \leq k \leq r/2$,  then $\sigma(\Z^{\boxtimes d},r,k) \leq r-2k+2$.
\end{lemma}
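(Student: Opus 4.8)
The plan is to give an explicit winning strategy for $s=r-2k+2$ spies in $\cG(\Z^{\boxtimes d},r,s,k)$, generalizing the order-statistic argument from the proof of \thmref{Zthm} from a single coordinate to all $d$ coordinates by means of coordinate-wise medians.

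I would fix, once and for all, a partition of the $r$ revolutionaries into one distinguished block $B$ of size $2k-1$ (this exists since $k\le r/2$ gives $r\ge 2k$) together with $r-2k+1$ singleton blocks, and assign one spy to each of these $1+(r-2k+1)=r-2k+2=s$ blocks. The spy of a singleton block is instructed to stay on its revolutionary forever, which is always legal. The spy of $B$ is instructed to occupy, at the end of every round, the \emph{coordinate-wise median} of the positions of the $2k-1$ revolutionaries of $B$: the vertex whose $\ell$-th coordinate equals the $k$-th smallest of the $\ell$-th coordinates of the $2k-1$ revolutionaries of $B$, for each $1\le\ell\le d$. In round $0$ the spies simply occupy these target vertices.

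Two things then need checking. For legality of the spies' moves: when the revolutionaries move one step in $\Z^{\boxtimes d}$, every coordinate of every revolutionary changes by at most $1$, so applying the one-step stability of order statistics proved in \thmref{Zthm} to each of the $d$ coordinates separately shows the $k$-th smallest value in each coordinate changes by at most $1$; hence the new coordinate-wise median of $B$ differs from the old one by at most $1$ in every coordinate, i.e.\ is adjacent to it (or equal) in $\Z^{\boxtimes d}$, so the $B$-spy can move there, and the singleton spies keep up trivially. For guarding: suppose at the end of some round a vertex $v$ holds $k$ revolutionaries. If one of them lies in a singleton block, its dedicated spy sits at $v$ and the meeting is guarded. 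Otherwise all $k$ of them lie in $B$, so in each coordinate $\ell$ the value $v_\ell$ occurs with multiplicity at least $k$ among the $2k-1$ coordinate values of $B$; since a value of multiplicity at least $k$ in a list of $2k-1$ reals necessarily occupies the median ($k$-th) position, the coordinate-wise median of $B$ equals $v$, and the $B$-spy guards the meeting.

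The only points requiring an argument are the short counting fact that a value of multiplicity $\ge k$ among $2k-1$ reals sits at the $k$-th position (bound the numbers of strictly smaller and strictly larger entries by $k-1$ in total), and the careful coordinatewise bookkeeping when invoking the order-statistic stability; I do not expect any genuine obstacle. I would also note that the numerology $r-2k+2$ is exactly $1$ (the $B$-spy) plus $r-(2k-1)$ (the singletons), and that when $r=2k-1$ the same strategy works verbatim with no singleton blocks.
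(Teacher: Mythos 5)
Your proposal is correct and is essentially the paper's proof: the paper also posts one spy at the coordinate-wise $k$-th order statistic of a distinguished group of $2k-1$ revolutionaries (legal by the order-statistic stability from \thmref{Zthm}) and handles the remaining $r-2k+1$ revolutionaries with one shadowing spy each, merely packaging the combination of the two sub-strategies via Lemma~\ref{sumlemma} instead of describing it directly. Your explicit verification that a value of multiplicity at least $k$ among $2k-1$ entries occupies the $k$-th position is a detail the paper leaves implicit, but it is the same argument.
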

\begin{proof}
Clearly $\sigma(\Z^{\boxtimes d},2k-1,k) \geq 1$.  We give a strategy for the spies showing that $\sigma(\Z^{\boxtimes d},2k-1,k) = 1$.  Suppose the revolutionaries are in some position.  Fix $1 \leq i \leq d$.  Let $x_{i,1}, \ldots, x_{i,2k-1}$ be the $i$th coordinates of the revolutionaries.  Let $c_i= x_{i,(k)}$ be their $k$th order statistic. The spies'  response is to move the spy to the vertex $c = (c_1, \ldots, c_d)$.  By the argument in Theorem \ref{Zthm}, this is a playable strategy for the spies.  Furthermore, it guards all meetings of size $k$ or more.   Lemma \ref{sumlemma} implies $\sigma(\Z^{\boxtimes d},r,k) \leq \sigma(\Z^{\boxtimes d},r-2k+1,1)+ \sigma(\Z^{\boxtimes d},2k-1, k) \leq (r-2k+1) +1 = r - 2k +2$. 
\end{proof}

Theorem \ref{Z2thm} will follow from this next theorem (see Corollary \ref{maincor}).
\begin{theorem} \label{85thm}
We have $\sigma(\Z^{\boxtimes 2},8,2)=6$.
\end{theorem}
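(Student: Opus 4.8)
The equality is two inequalities, of which the upper bound is the easy half: applying Lemma~\ref{Zdlemma} with $d=2$, $r=8$, $k=2$ gives $\sigma(\Z^{\boxtimes 2},8,2)\le r-2k+2=6$. Unwinding that lemma, five spies permanently shadow five fixed revolutionaries (legitimate since $\sigma(\Z^{\boxtimes 2},1,1)=1$ by Lemma~\ref{splittinglemma}) while the sixth tracks the coordinatewise median of the remaining three, which is a sustainable move by the order-statistics argument of Theorem~\ref{Zthm} and guards every meeting among those three; any meeting involving a shadowed revolutionary is guarded by its shadow.

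The substance is the lower bound $\sigma(\Z^{\boxtimes 2},8,2)\ge 6$: five spies cannot win $\cG(\Z^{\boxtimes 2},8,5,2)$, and I would prove this by producing a winning strategy for the eight revolutionaries. The plan rests on two rigidity facts. First: if two revolutionaries share a vertex $v$ and only one spy lies within distance $2$ of $v$, then on the revolutionaries' next move that spy must already occupy $v$ --- only a spy within distance $2$ of $v$ can reach any cell the pair could step to, and if the lone such spy sits elsewhere then $B(v,1)\not\subseteq B(\mathrm{spy},1)$, so the pair can step together onto a cell of the $3\times 3$ block around $v$ that no spy reaches, winning. Hence a colocated pair acts as a ``super-revolutionary'' pinning a spy to its exact location, and if the revolutionaries start as four pairs placed pairwise at distance more than $4$ then, short of an immediate win, at least three spies are pinned onto three of the pairs and at most two are with the fourth: effectively four spies committed and one free. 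Second: $\sigma(\Z^{\boxtimes 2},4,2)=2$ (upper bound from Lemma~\ref{Zdlemma}, lower bound from Lemma~\ref{splittinglemma}), and more pointedly one spy cannot guard four revolutionaries sharing a vertex, nor two colocated pairs on adjacent vertices, since such a configuration presents two distinct meeting cells while one spy reaches only one vertex per move.

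The strategy would then force a violation of the second fact: split pairs into singletons, march the singletons toward other clusters, and --- reacting to which singleton of each split the pinned spy elects to chase --- route the unchased singletons so that four revolutionaries gather on one vertex that at most one spy can reach on its response, producing an unguarded double meeting. The step I expect to be the main obstacle is verifying that this can be forced against \emph{every} reply of the five spies: since no static position of eight revolutionaries presents more than four meeting cells, five suitably placed spies can always cover every meeting, so the revolutionaries can never win merely by counting and must instead dynamically drag the spies out of position --- the elementary tools reflect exactly this, Lemma~\ref{splittinglemma} and Theorem~\ref{productlemma} (through $\sigma(\Z,8,2)=4$) yielding only $\sigma(\Z^{\boxtimes 2},8,2)\ge 4$. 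The forcing argument must therefore genuinely use two dimensions, the point being that a spy within distance $1$ of one cluster cannot be within distance $1$ of a second cluster more than $2$ away, so the four pinned spies and the free one cannot simultaneously be everywhere the revolutionaries' threats demand; carrying this out will require an extended case analysis on the relative positions of the five spies. (It may streamline the bookkeeping to first note that the revolutionaries can confine play to a strip of bounded width inside $\Z^{\boxtimes 2}$, since clamping the spies' transverse coordinate into the strip only strengthens them and hence a revolutionary strategy winning on the strip transfers up; the theorem is the base case $r=8$ of the conjectured $\sigma(\Z^{\boxtimes 2},r,2)=r-2$.)
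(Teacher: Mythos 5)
Your upper bound is fine and matches the paper: Lemma~\ref{Zdlemma} with $r=8$, $k=2$ gives $\sigma(\Z^{\boxtimes 2},8,2)\le 6$, and your unwinding of that lemma (five shadows plus one coordinatewise-median spy) is exactly what it encodes. Your two preliminary ``rigidity facts'' are also correct as stated: a colocated pair does pin a spy to its exact vertex (in the $L^\infty$ metric $B(v,1)\subseteq B(p,1)$ forces $p=v$), and one spy cannot guard two simultaneous meetings at distinct vertices.

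The problem is that the entire substance of the theorem --- a winning strategy for $8$ revolutionaries against $5$ spies, verified against \emph{every} spy reply --- is announced but not carried out. You write that the plan is to ``route the unchased singletons so that four revolutionaries gather on one vertex that at most one spy can reach,'' and then concede that verifying this against all replies ``will require an extended case analysis.'' That case analysis \emph{is} the proof; without it you have a heuristic, not a lower bound. There are concrete reasons to doubt the four-colocated-pairs opening in particular: the moment a pinned pair splits, its spy is released and can shadow one of the two singletons, and the other singleton still has a spy within distance $1$; it is not at all clear that the revolutionaries can then concentrate four agents faster than the five spies (four released plus one free) can redistribute. The paper avoids this by opening instead with eight \emph{distinct} positions $(\pm1,\pm1)$, $(\pm3,\pm3)$, whose pairwise one-move and three-move meeting threats (the Box and Wedge properties) force the five spies into an essentially unique configuration, up to symmetry, in two cases; from there an explicit two-to-three-move sequence creates three simultaneous threats that the rigidified spies cannot all cover. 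Until you either reproduce an analysis of that kind or complete your own, the lower bound $\sigma(\Z^{\boxtimes 2},8,2)\ge 6$ is not established.
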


\begin{proof}
By Lemma \ref{Zdlemma}, $\sigma(\Z^{\boxtimes 2},8,2) \leq 6$. To complete the proof, we give a winning strategy for the revolutionaries in $\cG(\Z^{\boxtimes 2},8,5,2)$.  In the first round, the revolutionaries place their agents on the eight positions $(\pm 1, \pm 1)$ and $(\pm 3, \pm 3)$ (see \figref{Initial}).  In all of our figures the center point is position $(0,0)$, and each $X$ represents a revolutionary while each $O$ represents a spy.  If two revolutionaries may reach a vertex in $n$ rounds, then there must be a spy within distance $n$ of $v$ to guard that potential meeting.  We often describe this by saying that a spy must guard a meeting at $v$ in $n$ rounds.

\noindent \emph{Claim 1 (Box Property): One or more spies must begin in each of the four boxes  $[1,3]\times[1,3]$, $[1,3]\times[-3,-1]$,$[-3,-1]\times[-3,-1]$, $[-3,-1]\times[1,3]$; we call these boxes $B_1$, $B_2$, $B_3$, and $B_4$ respectively (see \figref{Initial}).}

By symmetry, we consider $B_1$.  The revolutionaries at $(3,3)$ and $(1,1)$ may form a meeting at $(2,2)$ in one round; at least one spy must be in $B_1$ to guard this meeting.

Let $W_1$ be the ``wedge" of points $W_1 = \{(x,y): y \geq 1, y \geq |x|\}$.  We also consider the wedges obtained from $W_1$ by reflections in the lines $y=x$ and $y=-x$; in clockwise order from $W_1$, we call these $W_2$, $W_3$, and $W_4$ (see \figref{Initial}).

\noindent \emph{ Claim 2 (Wedge Property): There must be at least two spies present in $W_1$; furthermore one of those spies must be distance $1$ from $(0,2)$ and another distinct spy must be at distance $3$ from $(0,6)$.  We call this the \emph{wedge property} for $W_1$.  By symmetry (reflections through the lines $y=x$ and $y=-x$), analogous wedge properties hold for $W_2$, $W_3$, and $W_4$.}

 By symmetry we consider $W_1$.  The revolutionaries at $(-1,1)$ and $(1,1)$ can form a meeting in one round at $(0,2)$ while, simultaneously, the revolutionaries at $(-3,3)$ and $(3,3)$ can form a meeting at $(0,6)$ in three rounds.  Unless two spies are located as described, one of these meetings will be uncovered.

\begin{figure}
\centering
 \rotatebox{0}{\includegraphics[width=0.5\textwidth]{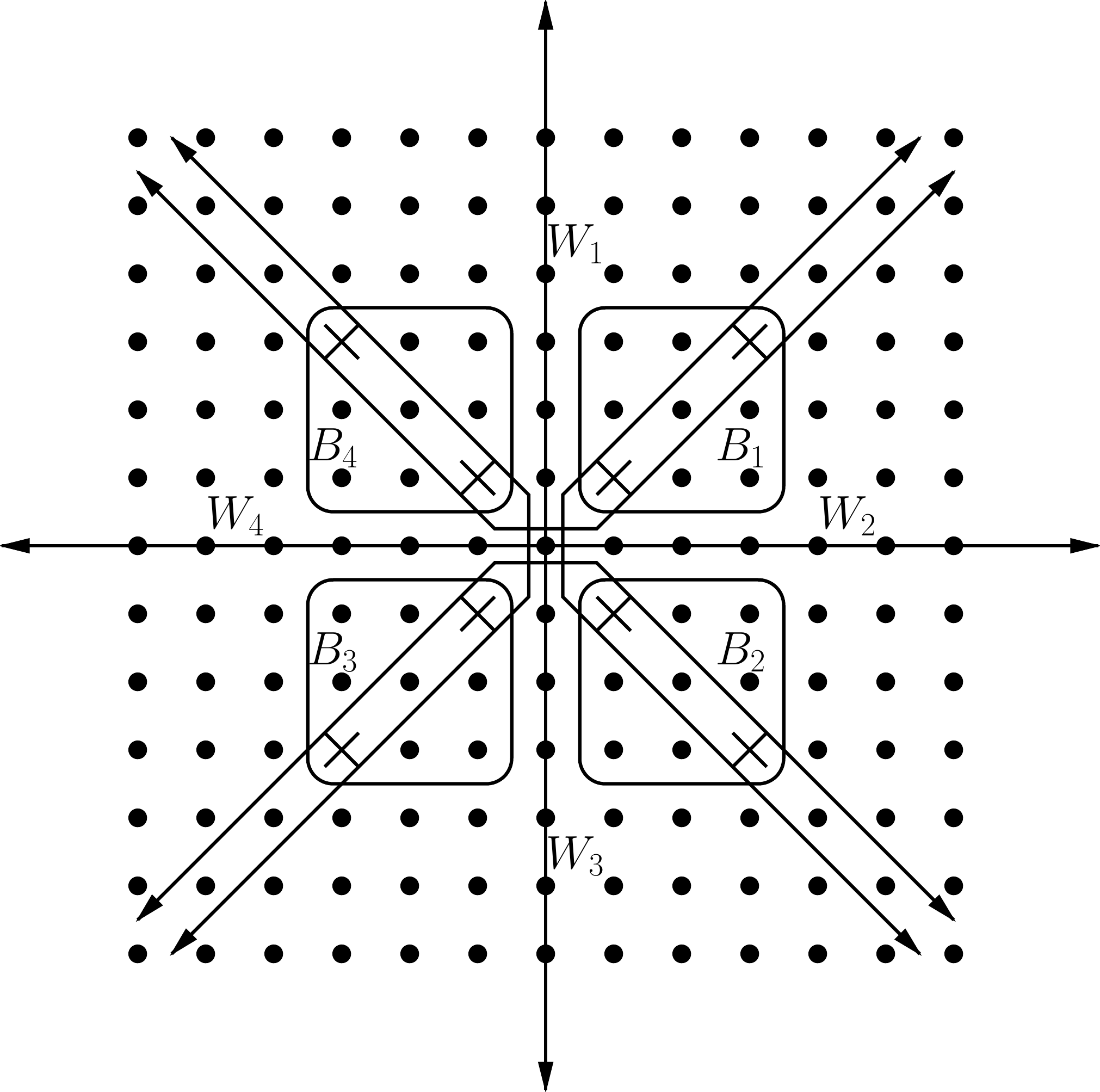}}
\caption{The revolutionaries' initial position, the boxes $B_i$, and wedges $W_i$.}
\label{Initial}
\end{figure}

By symmetry and the pigeonhole principle, we may assume that at least two spies, $s'_1$ and $s''_1$, have positions $(x,y)$ with $x \geq 0$ and $y \geq 0$.  In fact, we may assume that there are exactly two such spies, since each of the boxes $B_2$, $B_3$, and $B_4$ must contain a spy; we call those spies $s_2$, $s_3$, and $s_4$ respectively.  Since the wedge property holds for $W_3$ and $W_4$, spy $s_3$ must lie in both wedges; furthermore its position must be either $(-3,-3)$ or $(-1,-1)$.  This leads to the two cases below.

\noindent {\bf Case 1: $s_3$ is at $(-3,-3)$}

The following discussion is illustrated in \figref{case1}.  Since $s_3$ is at $(-3,-3)$, the wedge property for $W_4$ implies that $s_4$ is in $[-3,-1] \times \{1\}$.  Similarly, $s_2$ is in $\{1\} \times [-3,-1]$.  Furthermore, $s'_1$ must be in $[0,1]\times[0,1]$ to guard against a meeting at $(0,0)$ by the revolutionaries at $(-1,-1)$ and $(1,1)$; neither $s_2$ nor $s_4$ can guard $(0,0)$, since they must guard against the meetings at $(2,-2)$ and $(-2,2)$.   Given these restrictions on $s'_1$, $s_2$, and $s_4$, the wedge properties for $W_1$ and $W_2$ imply that $s''_1$ must be at $(3,3)$.

Suppose that the revolutionaries at $(-1,1)$ and $(-1,-1)$ form a meeting at $(-2,0)$ and those from $(-3,3)$ and $(1,1)$ form a meeting at $(-1,3)$.  Let $(x,y)$ be the position of $s'_1$.  We must have $y \geq 1$ because, since spy $s_4$ must guard $(-2,0)$, $s'_1$ must guard $(-1,3)$.  By symmetry, $x \geq 1$. Thus $s'_1$ is at $(1,1)$.  Now $s_4$ must be in $[-2,-1] \times \{1\}$ to guard against a meeting at $(-1,0)$ of revolutionaries from $(-1,1)$ and $(-1,-1)$.  Similarly $s_2$ must be in $\{1\} \times [-2,-1]$.

We also must have $s_4 = (-1,1)$ or $s_2=(1,-1)$.  If not, then the revolutionaries at $(-1,1)$ and $(-1,-1)$ can meet at $(0,0)$, while the revolutionaries at $(1,1)$ and $(1,-1)$ can meet at $(2,0)$.  It will not be possible for the spies to guard both meetings.  By symmetry we may assume $s_4 = (-1,1)$.

\begin{figure}
\centering
 \rotatebox{0}{\includegraphics[width=0.5\textwidth]{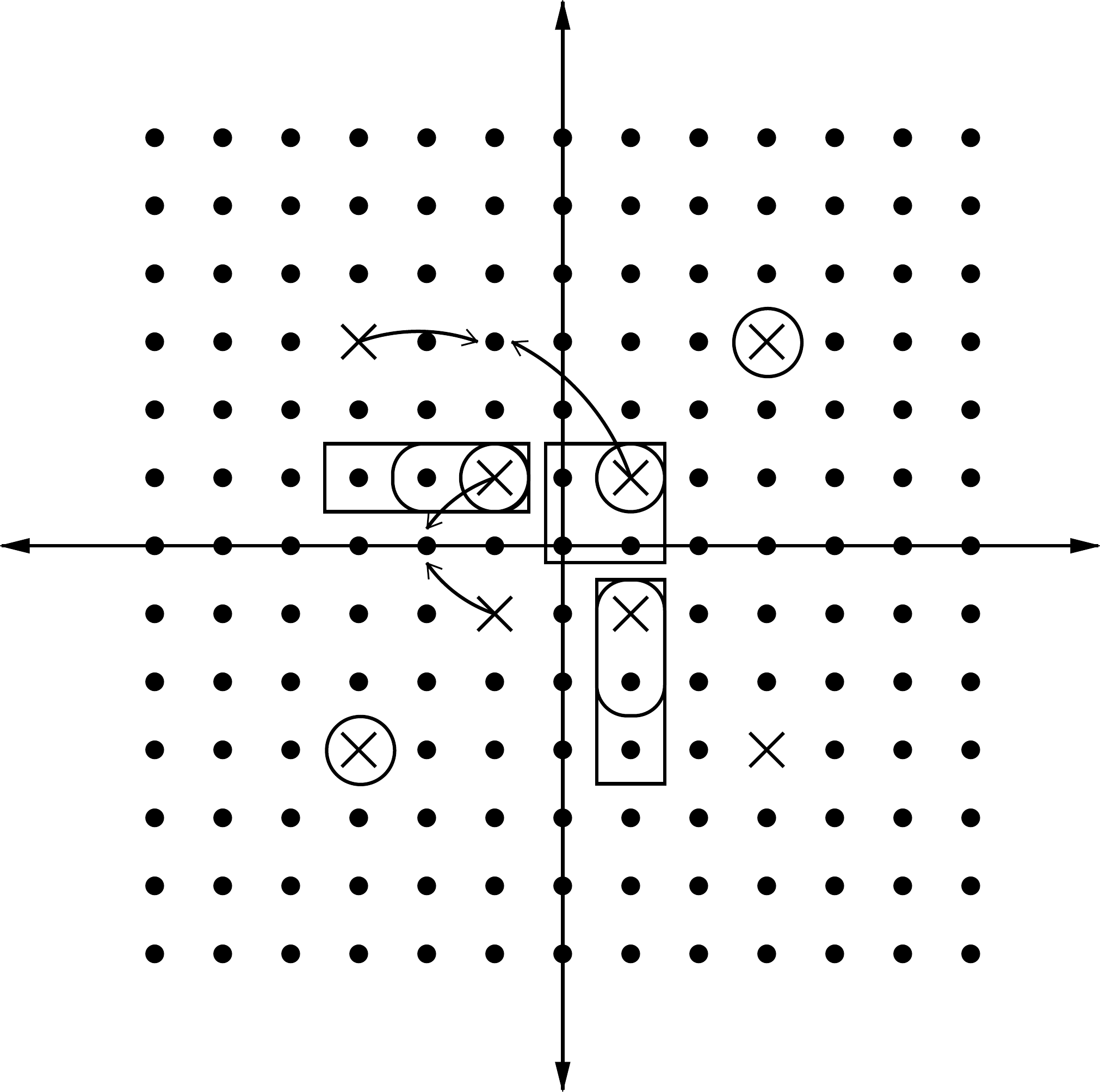}}
\caption{Case $1$.}
\label{case1}
\end{figure}

Thus at the beginning of round 1, we may assume that the spies and revolutionaries are located as in \figref{case1-0}.  This figure indicates that $s_2$ is located somewhere in $R_2 = \{1\} \times [-2,-1]$.  The revolutionaries' strategy is to move the revolutionary at $(-1,-1)$ to $(-2,-2)$ and the one at $(-1,1)$ to $(0,0)$, while keeping the other revolutionaries in place.   

\begin{figure}
\centering
 \rotatebox{0}{\includegraphics[width=0.5\textwidth]{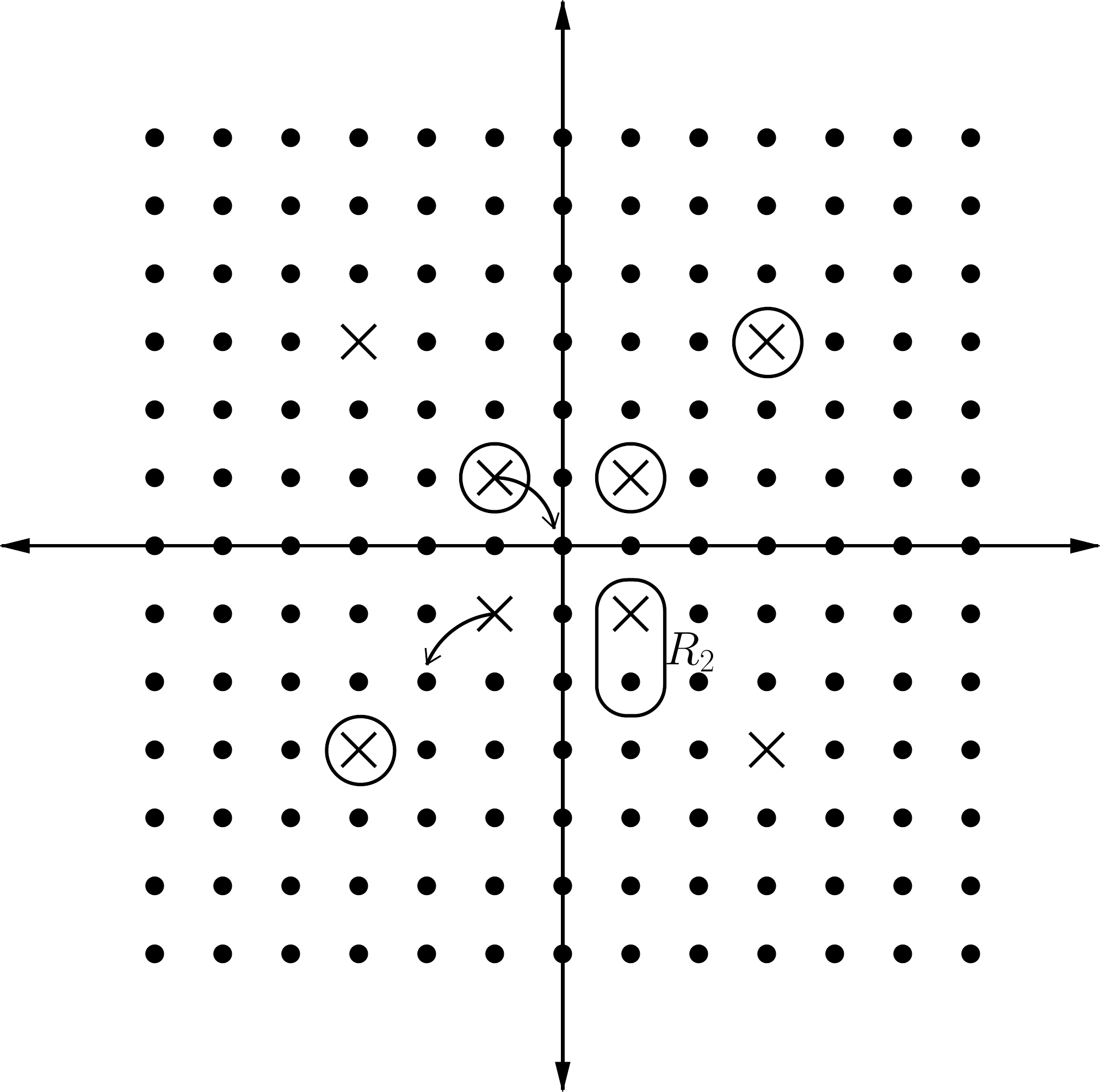}}
\caption{Case $1$: at the end of round $0$.}
\label{case1-0}
\end{figure}

We now analyze the positions that the spies must take at the end of round 1, see \figref{case1-1}. The spies must keep the spies at $(3,3)$ and $(-3,3)$ fixed to continue guarding meetings at $(\pm6, 0)$ and $(0, \pm6)$.  Besides these two spies, only the spy in $R_2$ (and only if it were located at $(1,-2)$) could be moved to help guard $(0,-6)$, but that spy cannot assist, as it must also guard the meeting of the revolutionaries at $(-2,-2)$ and $(1,-1)$ at the point $(0,-3)$.

Let $(a,b)$ be the position of the spy in $R_2$ in round $0$, and let $(a',b')$ be its position in round $1$.  We have $a'=1$, since this spy must guard $(2,-2)$ and also the meeting of $(-2,-2)$ and $(1,-1)$ at $(-1,-3)$.  We must have $b' \leq -2$, since this spy must guard the meeting of the revolutionaries at $(-2,-2)$ and $(3,-3)$ at $(1,-5)$.  Thus, the spies must have a spy in $R'_2 = \{1\} \times [-3,-2]$ at the end of round $1$.  (See Figure \ref{case1-1}.)

The spy located originally at $(1, 1)$ cannot decrease its $x$-coordinate, because it must guard $(2, 0)$. This forces the spy originally located at $(-1, 1)$ to decrease its $y$-coordinate to guard $(-1,-1)$.  This same spy must also decrease its $x$-coordinate to guard against the meeting of the revolutionaries from $(-3,3)$ and $(-2,-2)$ at  $(-5, 1)$. This forces the spy at $(1, 1)$ to move to $(1, 0)$ to guard meetings at $(2, 0)$, $(0, 1)$, and $(0,-1)$. Note that the spy in $R'_2$ cannot help guard these, since it independently must guard the meeting at $(1,-5)$ by revolutionaries from $(-2,-2)$ and $(3,-3)$. 

Now the revolutionaries at $(-3,3)$ and $(1,1)$ can form an unguarded meeting at $(-1, 3)$ in $2$ moves, see \figref{case1-1}.

\begin{figure}
\centering
 \rotatebox{0}{\includegraphics[width=0.5\textwidth]{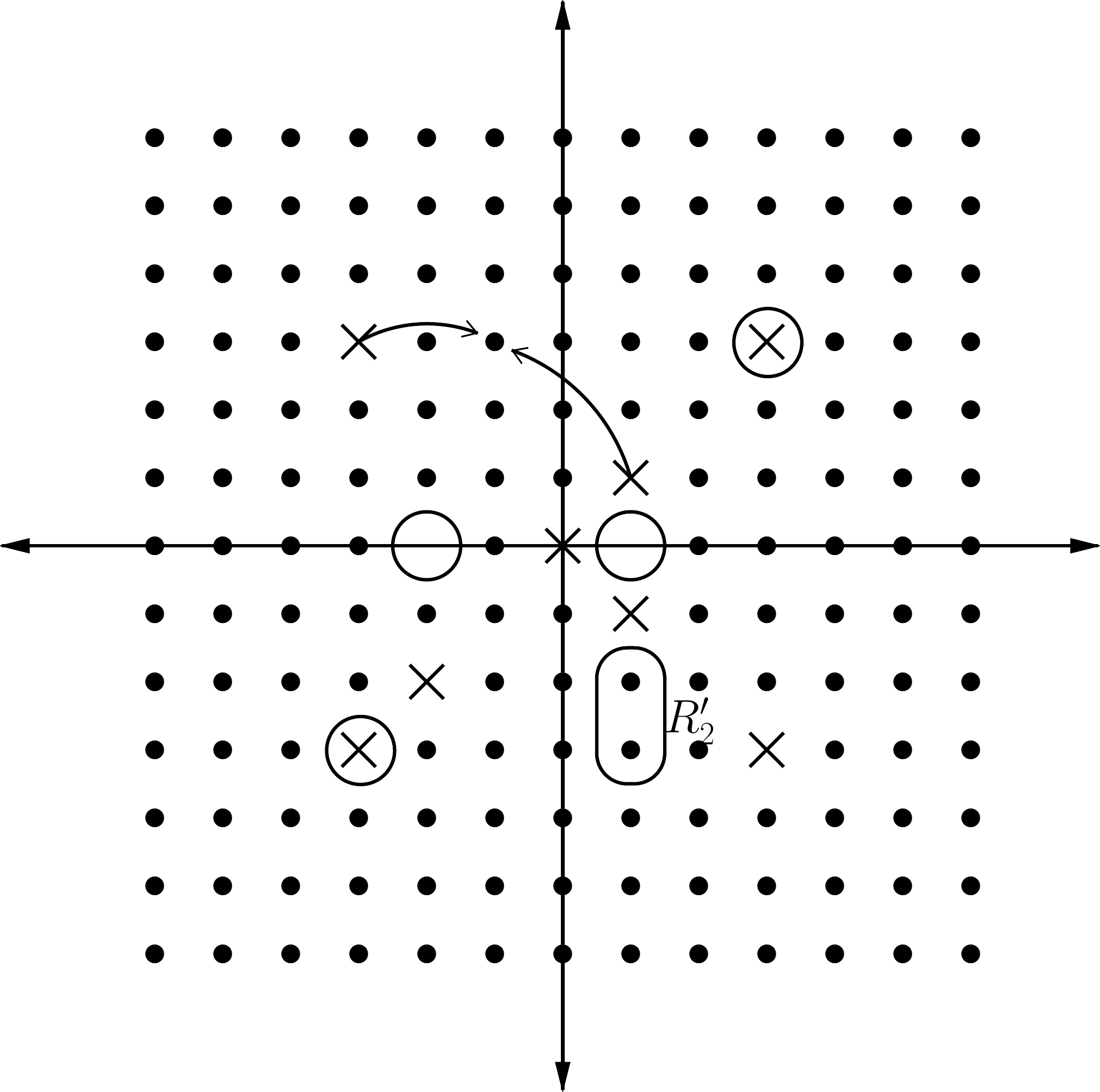}}
\caption{Case $1$: at the end of round $1$.}
\label{case1-1}
\end{figure}

\noindent {\bf Case 2: $s_3$ is at $(-1,-1)$}

Since $s_3$ is at $(-1,-1)$ the wedge property implies that $s_2$ is in $[1,3]\times\{-3\}$ and $s_4$ is in $\{-3\} \times [1,3]$.  If $s_2$ is at $(3,-3)$ and $s_4$ is at $(-3,3)$, then the revolutionaries from $(-1,1)$ and $(-1,-1)$ can form a meeting at $(-2,0)$, which must be guarded by $s_3$.  Simultaneously, the revolutionaries at $(1,-1)$ and $(-3,-3)$ can form a meeting at $(-1,-3)$, which will be unguarded (see \figref{case2a}).

\begin{figure}
\centering
 \rotatebox{0}{\includegraphics[width=0.5\textwidth]{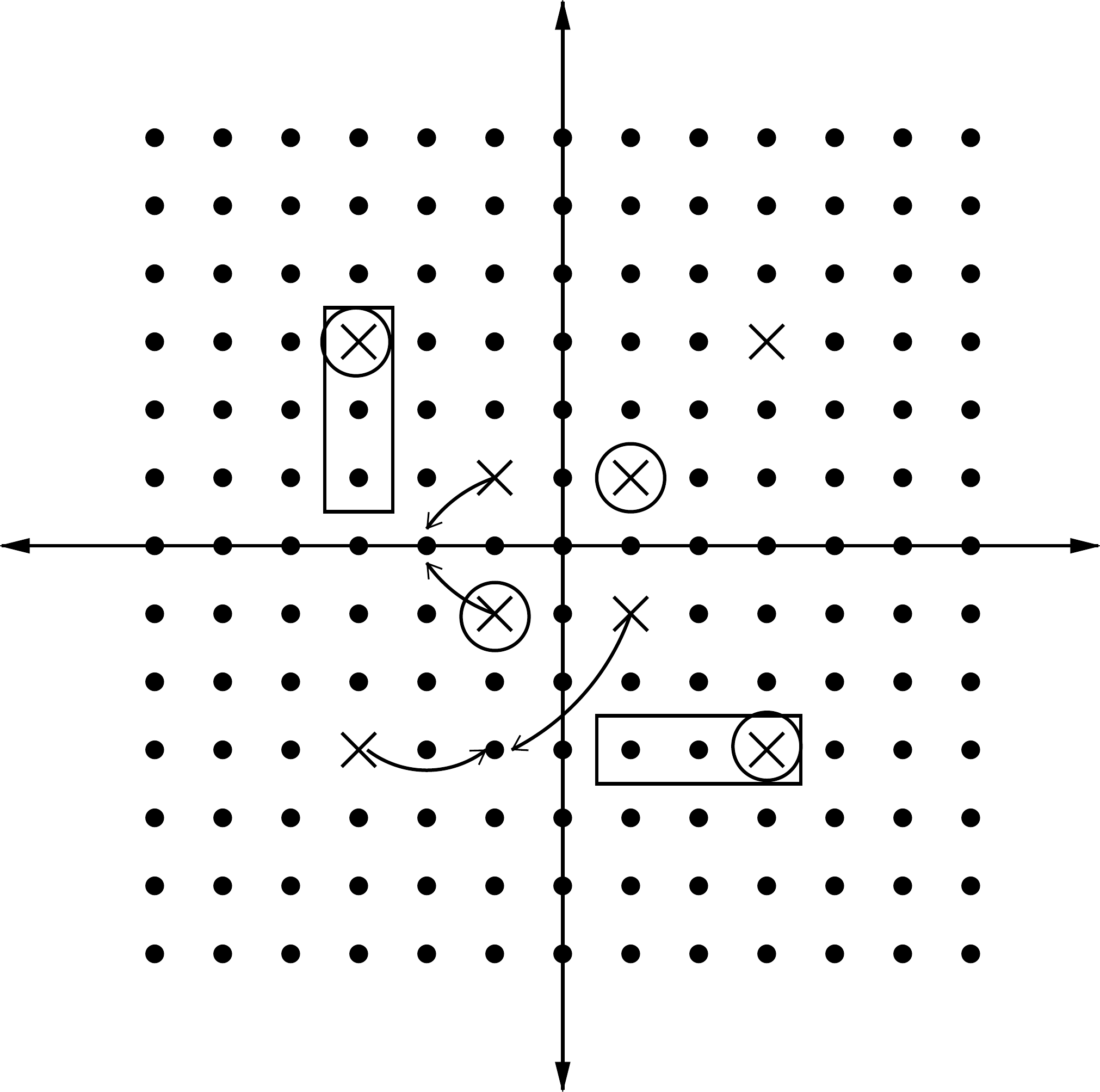}}
\caption{Case 2: at the end of round $0$, Part 1.}
\label{case2a}
\end{figure}

Without loss of generality, suppose instead that $s_2$ is not at $(3,-3)$.  By the wedge property for $W_2$, we must have $s'_1$ is in $[1,3] \times [0,1]$ and $s''_1$ is in $[3,9] \times [0,3]$.  (See \figref{case2b}).  In fact, $s'_1$ must be at $(1,1)$ in order to guard the meeting at $(0,2)$.  Now the revolutionaries at $(-1,-1)$ and $(1,-1)$ can form a meeting at $(0,-1)$.  This must be guarded by $s_3$. Simultaneously, the revolutionaries at $(-1,1)$ and $(-3,-3)$ can form a meeting at $(-3,-1)$ in two rounds; it can only be guarded if $s_4$ began at $(-3,1)$.  By symmetry $s_2$ must begin at $(1,-3)$.  Since now $s''_1$ must guard the meeting at $(0,6)$, it must be located at $(3,3)$, see \figref{case2b}.

\begin{figure}
\centering
 \rotatebox{0}{\includegraphics[width=0.5\textwidth]{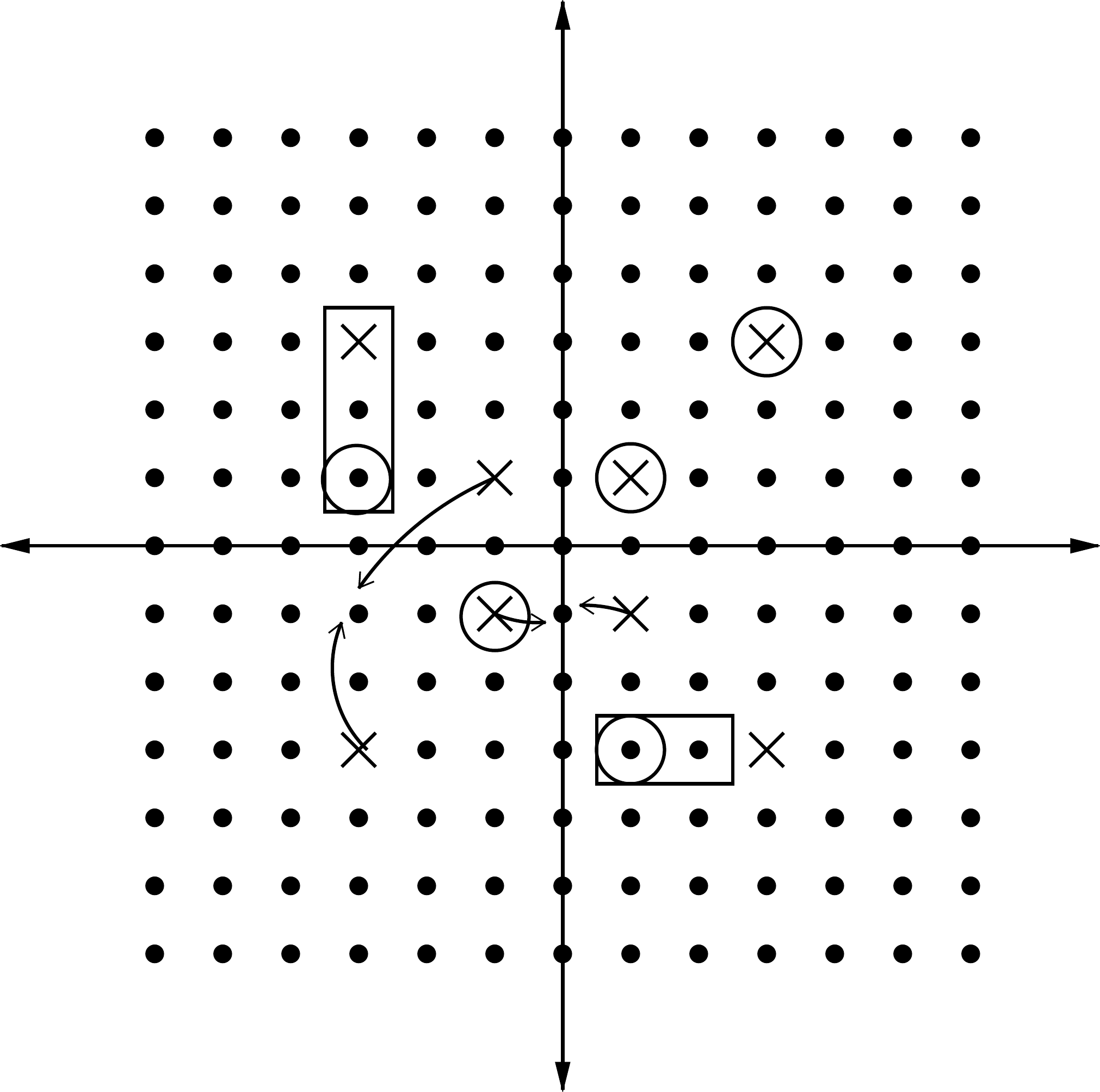}}
\caption{Case 2: at the end of round $0$, Part 2.}
\label{case2b}
\end{figure}

Thus at the end of round 0, the spies and revolutionaries are positioned as in \figref{case2-0}.

\begin{figure}
\centering
 \rotatebox{0}{\includegraphics[width=0.5\textwidth]{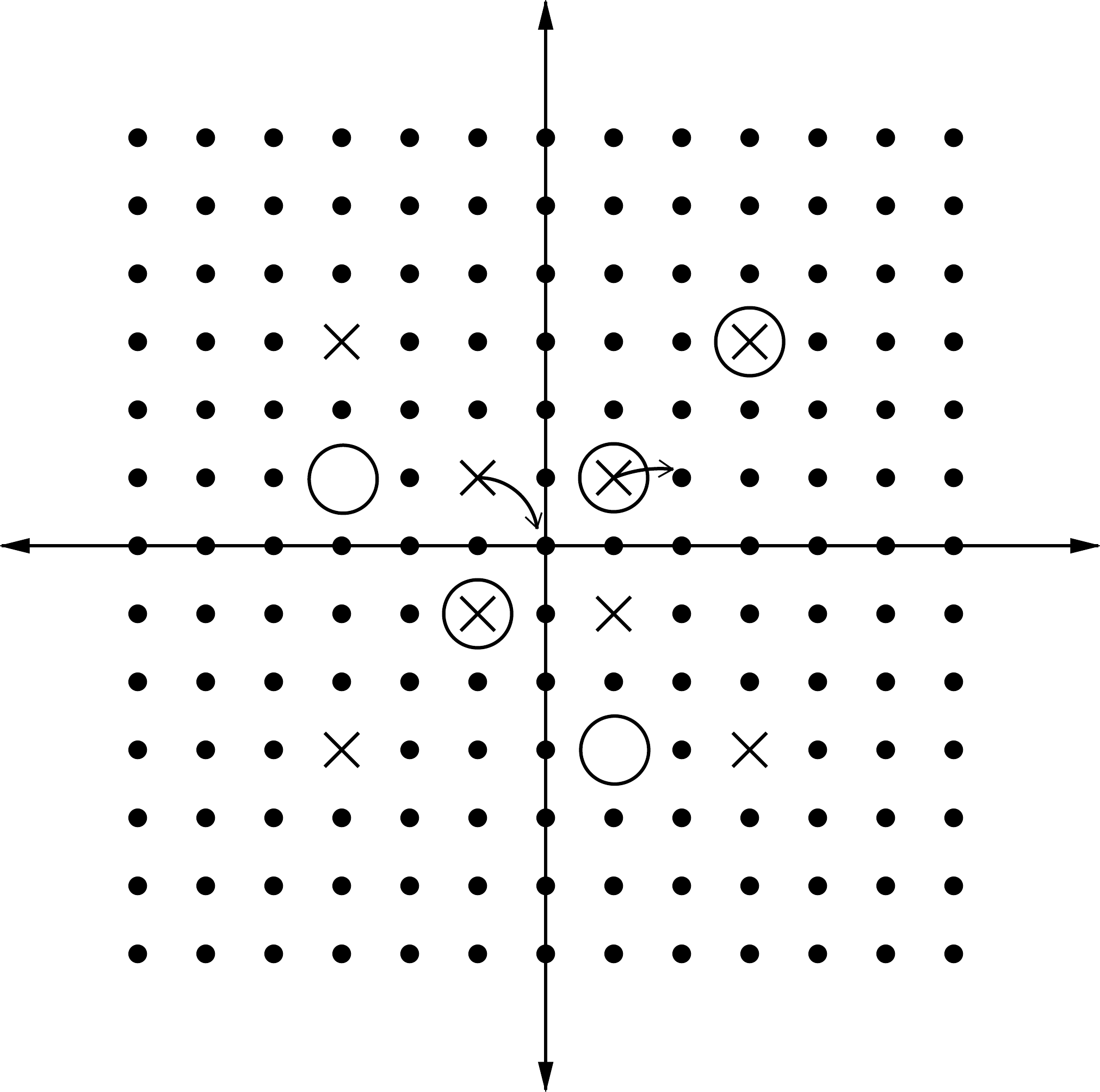}}
\caption{Case 2: at the end of round $0$, Part 3.}
\label{case2-0}
\end{figure}

The revolutionaries' strategy is to move the revolutionary at $(-1,1)$ to $(0,0)$ and the one at $(1,1)$ to $(2,1)$ while leaving all other revolutionaries unchanged. \figref{case2-1} illustrates how the spies must be located at the end of round 1 in order to compensate. The spies must leave the spy at $(3,3)$ in place to guard $(6,0)$ and $(0,6)$.   At the end of its move, the spy at $(-3,1)$ must be somewhere in $L_4 = [-4,-3]\times[0,2]$ as it must guard $(-6,0)$. Similarly, the spy at $(1,-3)$ must remain in $L_2 = [0,2] \times [-4,-3]$.  

The spy at $(-1,-1)$ must stay in place to guard meetings at $(-2,-2)$ and $(0,0)$ in one move. (Note: the spy at $(-1,-1)$ must guard $(0,0)$, since the spy at $(1,1)$ must guard against the potential meeting of the revolutionaries from $(2,1)$ and $(1,-1)$  at the point $(2,0)$.)  Spy $s'_1$ must move to $(1,0)$ to protect against meetings $(0,-1),(1,1),(2,1)$ in the next round. Spy $s_3$ cannot help, since it must protect $(-2,-2)$, see \figref{case2-1}.

%Also spy $s'_1$ at $(1,1)$ must remain in $[0,1] \times [0,1]$ to guard a meeting at $(2,0)$. 
% $s'_1$ must move to $(0,1)$ as it must also guard a meeting at $(-1,0)$.  $s'_3$ must guard 
%the 
%meeting at $(-2,-2)$ by the revolutionaries at $(-3,-3)$ and $(-1,-1)$ so $s'_1$ must guard the 
%simultaneous meeting at $(0,-1)$ by revolutionaries at $(1,-1)$ and $(0,0)$.

\begin{figure}
\centering
 \rotatebox{0}{\includegraphics[width=0.5\textwidth]{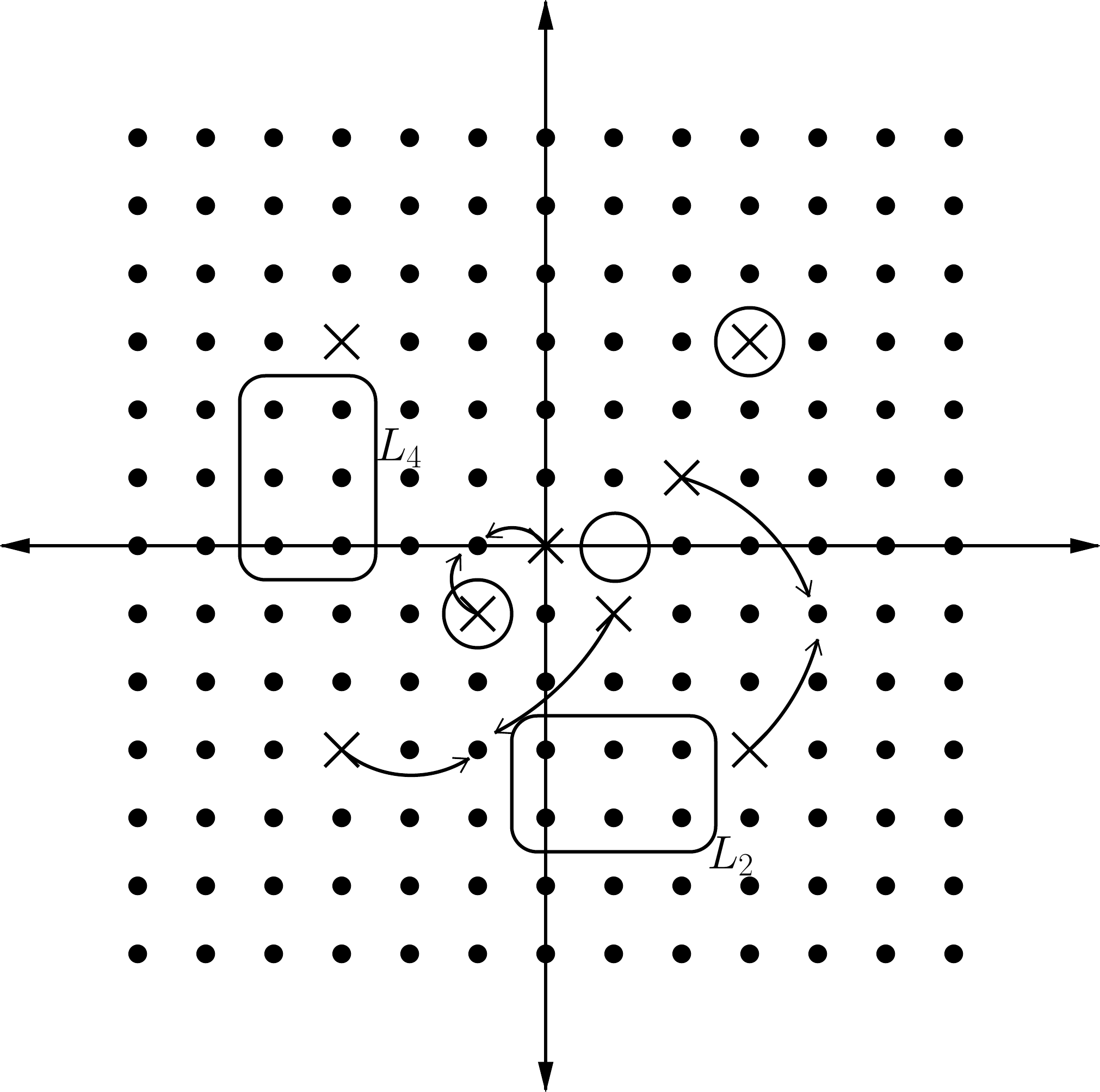}}
\caption{Case 2: at the end of round $1$.}
\label{case2-1}
\end{figure}

The revolutionaries' strategy at the beginning of round 2 is to simultaneously move revolutionaries $(0,0)$ and $(-1,-1)$ to $(-1,0)$, revolutionaries $(1,-1)$ and $(-3,-3)$ to $(-1,-3)$, and revolutionaries $(2,1)$ and $(3,-3)$ to $(4,-1)$.  Only the spy at $(-1,-1)$ can guard the first meeting, and consequently only the spy in $L_2$ can guard either of the other two meetings.  Thus the revolutionaries win.

\end{proof}

\begin{corollary} \label{maincor}
Theorem \ref{85thm} implies Theorem \ref{Z2thm}. 
\end{corollary}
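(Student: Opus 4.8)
To prove Corollary~\ref{maincor} I must derive Theorem~\ref{Z2thm} from Theorem~\ref{85thm}, and the plan is to assemble it from three facts, two already in hand. First, since $\Z^{\boxtimes d}=\Z^{\boxtimes 2}\boxtimes\Z^{\boxtimes(d-2)}$, Theorem~\ref{productlemma} gives $\sigma(\Z^{\boxtimes d},r,2)\ge\sigma(\Z^{\boxtimes 2},r,2)$ (the case $d=2$ being trivial), so it suffices to work in $\Z^{\boxtimes 2}$. Second, by the monotonicity in $r$ noted at the start of Section~2, $\sigma(\Z^{\boxtimes 2},r,2)\ge\sigma(\Z^{\boxtimes 2},8q,2)$ where $q=\floor{r/8}$; if $q=0$ the theorem is vacuous, so assume $q\ge 1$. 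The one substantial task is then to show that Theorem~\ref{85thm} forces $\sigma(\Z^{\boxtimes 2},8q,2)\ge 6q$, i.e.\ that the revolutionaries win $\cG(\Z^{\boxtimes 2},8q,6q-1,2)$; chaining the three bounds yields $\sigma(\Z^{\boxtimes d},r,2)\ge 6\floor{r/8}$.

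For that last step I would run $q$ copies of the revolutionaries' strategy from the proof of Theorem~\ref{85thm} in widely separated regions. Since $\sigma(\Z^{\boxtimes 2},8,2)=6$, the revolutionaries have a strategy winning $\cG(\Z^{\boxtimes 2},8,s',2)$ for every $s'\le 5$ (extra spies only help the spies); moreover this strategy terminates within a bounded number $T$ of rounds and only ever involves vertices at $L^\infty$-distance at most $6$ from its center. Pick centers $c_j=(100j,0)$ for $1\le j\le q$, and place the $8q$ revolutionaries as $q$ disjoint translated copies of Figure~\ref{Initial}, eight around each $c_j$. Call a spy \emph{local to block $j$} if its round-$0$ position lies within $L^\infty$-distance $20$ of $c_j$; since the $c_j$ are pairwise $100$ apart, each spy is local to at most one block, so the $q$ block-local counts sum to at most $6q-1$ and some block $i^*$ has at most $\floor{(6q-1)/q}=5$ local spies. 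The revolutionaries freeze every block except $i^*$ and, inside block $i^*$, play the translated Theorem~\ref{85thm} strategy against the (at most five) local spies, which produces an unguarded size-$2$ meeting within $T$ rounds. Any non-local spy starts at $L^\infty$-distance $>20$ from $c_{i^*}$, hence after $T$ moves is still at distance $>6$ and cannot occupy, let alone reach, any meeting vertex the strategy creates; so that meeting is genuinely unguarded and the revolutionaries win $\cG(\Z^{\boxtimes 2},8q,6q-1,2)$.

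The only point needing care -- and the crux of the reduction -- is that the Theorem~\ref{85thm} strategy must terminate in a number of rounds bounded \emph{uniformly} over the spies' responses and stay within a bounded window of vertices; otherwise far-away spies could eventually drift into block $i^*$. Both are visible in the proof of Theorem~\ref{85thm}: the Box and Wedge Properties, then the exhaustive split into Case~1 and Case~2, pin the spies down to explicit forced moves (deviating from a forced move only loses sooner), and in every branch an unguarded meeting appears within three rounds, with every vertex named in the argument -- including the extremal meeting vertices $(0,\pm 6)$ and $(\pm 6,0)$ -- at $L^\infty$-distance at most $6$ from the center. Thus $T=3$ and radius $6$ work, the separation $100$ far exceeds the $2(T+6)$ actually needed, and the three inequalities of the first paragraph combine to finish the proof.
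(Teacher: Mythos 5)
Your proposal is correct and follows essentially the same route as the paper: reduce to $d=2$ via Theorem~\ref{productlemma}, place $\floor{r/8}$ widely separated translates of the eight-revolutionary configuration, apply pigeonhole to find a block with at most five nearby spies, and run the $\cG(\Z^{\boxtimes 2},8,5,2)$ strategy there, using its bounded duration and bounded spatial extent to rule out interference from distant spies. The only differences are in the choice of constants (separation $100$ and locality radius $20$ versus the paper's $30$ and $11$), which do not affect the argument.
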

\begin{proof}
The winning strategy for the revolutionaries in $\cG(\Z^{\boxtimes 2},8,5,2)$ given in Theorem \ref{85thm} does not involve movement of the revolutionaries outside of the box $[-6,6]^2$ and takes no more than $5$ moves to achieve a meeting of size $2$.  Thus a spy outside the box $[-11,11]^2$ can not help prevent a meeting of size $2$.  

We play many independent copies of this game to show that the revolutionaries have a winning strategy in $\cG(\Z^{\boxtimes 2}, r, 6 \floor{\frac{r}{8}}-1,2)$.  For each $i$ with $1 \leq i \leq \floor{\frac{r}{8}}$, eight revolutionaries are placed at positions $(30i,0) + (\pm1,\pm1)$ and $(30i,0) + (\pm3,\pm3)$.  The spies must place at most five spies in some box $(30i,0) + [-11,11]^2$, and the revolutionaries then can play there according to their winning strategy in $\cG(\Z^{\boxtimes 2},8,5,2)$ to achieve a meeting of size $2$.  Thus $\sigma(\Z^{\boxtimes 2},r, 2) \geq 6 \floor{\frac{r}{8}}$.

By Theorem \ref{productlemma}, $\sigma(\Z^{\boxtimes d}, r, 2) \geq \sigma(\Z^{\boxtimes 2},r,2) \geq 6 \floor{\frac{r}{8}}$ for $d \geq 2$.
\end{proof}

\section{Conclusion}

The authors would like to thank the editor and the referees for their many helpful comments.  We are grateful for the comprehensive suggestions of one referee that greatly simplified our proof of Theorem \ref{Z2thm}.

\end{document}